\numberwithin{equation}{section} 
\begin{document}

 \PageNum{1}
 \Volume{201x}{Sep.}{x}{x}
 \OnlineTime{August 15, 201x}
 \DOI{0000000000000000}
 \EditorNote{Received x x, 201x, accepted x x, 201x}

\abovedisplayskip 6pt plus 2pt minus 2pt \belowdisplayskip 6pt
plus 2pt minus 2pt
\def\vsp{\vspace{1mm}}
\def\th#1{\vspace{1mm}\noindent{\bf #1}\quad}
\def\proof{\vspace{1mm}\noindent{\it Proof}\quad}
\def\no{\nonumber}
\newenvironment{prof}[1][Proof]{\noindent\textit{#1}\quad }
{\hfill $\Box$\vspace{0.7mm}}
\def\q{\quad} \def\qq{\qquad}
\allowdisplaybreaks[4]


\AuthorMark{Dinghuai Wang and Jiang Zhou}                             

\TitleMark{Central BMO spaces with variable exponent}  

\title{Central BMO spaces with variable exponent    
\footnote{Supported by NSFC (Grant No. 11261055 and No. 11271312)\\  $^\ast$Corresponding author}}                  

\author{Dinghuai \uppercase{Wang}$^{1}$, Zongguang Liu$^{2}$, Jiang \uppercase{Zhou}$^{\ast,1}$ and Zhidong Teng$^{1}$}           
    {1.College of Mathematics and System Sciences, Xinjiang University, Urumqi 830046, China\\
    E-mail\,$:$ Wangdh1990@126.com; zhoujiang@xju.edu.cn; liuzg@cumtb.edu.cn\\
2.Department of Mathematics, China University of Mining and Technology, Beijing 100083, China\\
E-mail\,$:$ liuzg@cumtb.edu.cn\\}

\maketitle%

\Abstract{In this paper, the central BMO spaces with variable exponent are introduced. As an application, we characterize these spaces by the boundedness of commutators of Hardy operator and its dual operator on variable Lebesgue spaces. The boundedness of vector-valued commutators on Herz spaces with variable exponent are also considered.}      

\Keywords{Central $\mathrm{BMO}$ space; Characterize; Commutator; Hardy operator; Variable exponent.}        

\MRSubClass{42B35,47B47.}      

\section{Introduction}

Given a measurable function $p: \mathbb{R}^n\rightarrow [1,\infty)$, $L^{p(\cdot)}(\mathbb{R}^n)$ denotes the set of measurable functions $f$ such that for some $\lambda>0$,
$$\int_{\mathbb{R}^n}\Big(\frac{|f(x)|}{\lambda}\Big)^{p(x)}dx<\infty.$$
$L^{p(\cdot)}(\mathbb{R}^n)$ is a Banach function space when equipped with the norm
$$\|f\|_{L^{p(\cdot)}(\mathbb{R}^n)}=\inf\Big\{\lambda>0:\int_{\mathbb{R}^n}\Big(\frac{|f(x)|}{\lambda}\Big)^{p(x)}dx\leq 1\Big\}.$$
Such space is called a {\it variable Lebesgue space}, since it is generalized from the standard Lebesgue
space. Variable Lebesgue spaces become one of the most important function spaces due to the fundamental paper \cite{KR} by Kov\'{a}\v{c}ik and R\'{a}kosn\'{i}k. Recently, Cruz-Uribe, Fiorenza, Martell and P\'{e}rez \cite{CFMP} proved that many classical operators in harmonic analysis, such as maximal operators, singular integrals, commutators and fractional integrals are bounded on the variable Lebesgue space.

On the other hand, it is well known that ${\rm BMO}(\mathbb{R}^n)$ space is just the dual space of Hardy space. Like this, the dual space of Herz-type Hardy space is central {\rm BMO} space. The so-called central {\rm BMO} space is defined by
$${\rm CBMO}^{p}(\mathbb{R}^n)=\{f\in L_{\rm loc}^{p}(\mathbb{R}^n): \|f\|_{{\rm CBMO}^{p}(\mathbb{R}^n)}<\infty\}$$
with
$$\|f\|_{{\rm CBMO}^{p}(\mathbb{R}^n)}=\sup_{r>0}\bigg(\frac{1}{|B(0,r)|}\int_{B(0,r)}|f(x)-f_{B(0,r)}|^{p}dx\bigg)^{1/p}.$$
The space ${\rm CBMO}^{p}(\mathbb{R}^n)$ can be regarded as a local version of ${\rm BMO}(\mathbb{R}^n)$ at
the origin. In fact, ${\rm BMO}(\mathbb{R}^n)\subsetneqq {\rm CBMO}^{p}(\mathbb{R}^n)$, where $1\leq p < \infty$. But, facts about ${\rm CBMO}^{p}(\mathbb{R}^n)$ are not always
simpler versions of facts about ${\rm BMO}(\mathbb{R}^n)$. For example, there is no John-Nirenberg inequality \cite{JN} for ${\rm CBMO}^{p}(\mathbb{R}^n)$. See also \cite{CL}, \cite{G}, \cite{LY} and\cite{WZ} for more details. In 2007, Fu, Lu, Liu and Wang \cite{FLLW} characterized ${\rm CBMO}^p(\mathbb{R}^n)$ space in terms of the boundedness of commutators of fractional Hardy operator.

In this paper, we will introduce the spaces of central ${\rm BMO}$ with variable exponent and characterize these spaces by the boundedness of commutators of Hardy operator and its dual operator on variable Lebesgue spaces. The boundedness of vector-valued commutators on Herz spaces with variable exponent are also considered.

Throughout this paper, the letter $C$ denotes constants which are independent of main variables and may change from one occurrence to another.
Let $B_{k}=\{x\in \mathbb{R}^n:|x|<2^{k}\}$ and $C_{k}=B_{k}\backslash B_{k-1}$ for $k\in \mathbb{Z}$. Denote $\chi_{k}=\chi_{C_{k}}$.

\section{Properties of variable exponent}

Denote by $\mathcal{P}(\mathbb{R}^n)$ the set of measurable functions $p$ on $\mathbb{R}^n$ such that $p_{-}>1$ and $p_{+}<\infty$, where
$$p_{-}:={\rm ess~inf}\{p(x):x\in\mathbb{R}^n\}, \quad p_{+}:={\rm ess~sup}\{p(x):x\in\mathbb{R}^n\}.$$
$p'(\cdot)$ means the conjugate exponent of $p(\cdot)$, namely $1/p(\cdot)+1/p'(\cdot)=1$ holds.

If $p(\cdot)\in \mathcal{P}(\mathbb{R}^n)$, then the norm $\|f\|_{L^{p(\cdot)}(\mathbb{R}^n)}$ is equivalent to
$$\sup\bigg\{\Big|\int_{\mathbb{R}^n}f(x)g(x)dx\Big|:\|g\|_{L^{p'(\cdot)}(\mathbb{R}^n)}\leq 1\bigg\}.$$
More precisely,
\begin{equation}\label{1.1}
\|f\|_{L^{p(\cdot)}(\mathbb{R}^n)}\leq \sup\bigg\{\Big|\int_{\mathbb{R}^n}f(x)g(x)dx\Big|:\|g\|_{L^{p'(\cdot)}(\mathbb{R}^n)}\leq 1\bigg\}\leq r_{p}\|f\|_{L^{p(\cdot)}(\mathbb{R}^n)},
\end{equation}
where $r_{p}:=1+1/p_{-}+1/p_{+}$ (see \cite{KR}).

Given a function $f\in L^{1}_{\rm loc}(\mathbb{R}^n)$, the Hardy-Littlewood maximal operator $M$ is defined by
$$M(f)(x):=\sup_{r>0}r^{-n}\int_{B(x,r)}|f(y)|dy.$$
The set $\mathcal{B}(\mathbb{R}^n)$ is of $p(\cdot)\in \mathcal{P}(\mathbb{R}^n)$ satisfying the condition that $M$ is bounded on $L^{p(\cdot)}(\mathbb{R}^n)$. It is well known that the boundedness of the Hardy-Littlewood maximal operator on Lebesgue spaces plays a key role in analysis. It also does in variable exponent Lebesgue spaces.

Next, we state some properties of variable exponent belonging to the class $\mathcal{B}(\mathbb{R}^n)$ (see \cite{D}).
\begin{lemma}\label{l1}
Let $p(\cdot)\in \mathcal{P}(\mathbb{R}^n)$. Then the following conditions are equivalent:
\begin{enumerate}
\item [\rm(1)] $p(\cdot)\in \mathcal{B}(\mathbb{R}^n)$;
\item [\rm(2)] $p'(\cdot)\in \mathcal{B}(\mathbb{R}^n)$;
\item [\rm(3)] $p(\cdot)/p_{0}\in \mathcal{B}(\mathbb{R}^n)$ for some $1<p_{0}<p_{-}$;
\item [\rm(4)] $\big(p(\cdot)/p_{0}\big)'\in \mathcal{B}(\mathbb{R}^n)$ for some $1<p_{0}<p_{-}$.
\end{enumerate}
\end{lemma}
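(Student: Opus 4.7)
The plan is to establish the four equivalences via a cycle that reduces to two genuinely substantive ingredients: a duality (self-improvement) principle for $\mathcal{B}(\mathbb{R}^n)$, and the ``left-openness'' of $\mathcal{B}(\mathbb{R}^n)$ along the scale $p_0 \mapsto p(\cdot)/p_0$. Schematically I will prove $(1)\Leftrightarrow(2)$, then $(1)\Leftrightarrow(3)$, and finally derive $(3)\Leftrightarrow(4)$ by applying $(1)\Leftrightarrow(2)$ to the exponent $p(\cdot)/p_0$.

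For $(1)\Leftrightarrow(2)$ I will use the norm equivalence \eqref{1.1}, which provides a genuine duality between $L^{p(\cdot)}$ and $L^{p'(\cdot)}$. The key step is to pair $M$ with its formal adjoint: given $f\in L^{p'(\cdot)}$ and $g\in L^{p(\cdot)}$ with $\|g\|_{L^{p(\cdot)}}\le 1$, one estimates $\int Mf \cdot g$ by linearizing $M$ on each point as an averaging operator and transferring the average onto $g$, then invoking the boundedness of $M$ on $L^{p(\cdot)}$. Together with \eqref{1.1} this yields $\|Mf\|_{L^{p'(\cdot)}}\lesssim \|f\|_{L^{p'(\cdot)}}$, and the reverse implication is symmetric.

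For $(3)\Rightarrow(1)$ I will use the standard ``power trick'': if $p_0>1$ and $p(\cdot)/p_0\in \mathcal{B}(\mathbb{R}^n)$, apply the assumed boundedness to $|f|^{p_0}\in L^{p(\cdot)/p_0}$ to get
\[
\bigl\|M(|f|^{p_0})\bigr\|_{L^{p(\cdot)/p_0}} \le C\,\bigl\||f|^{p_0}\bigr\|_{L^{p(\cdot)/p_0}} = C\,\|f\|_{L^{p(\cdot)}}^{p_0},
\]
and then use the pointwise bound $(Mf)^{p_0}\le M(|f|^{p_0})$ (Jensen, valid since $p_0>1$) together with the scaling identity $\|h^{p_0}\|_{L^{p(\cdot)/p_0}}=\|h\|_{L^{p(\cdot)}}^{p_0}$. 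The converse $(1)\Rightarrow(3)$ is the main obstacle: it is exactly Diening's left-openness theorem for $\mathcal{B}(\mathbb{R}^n)$, which says that membership in $\mathcal{B}(\mathbb{R}^n)$ is an open condition on the left in the scale $p(\cdot)\mapsto p(\cdot)/p_0$. I would prove this by a modular-level Rubio de Francia iteration: build an $A_1$-type majorant $\mathcal{R}h = \sum_{k\ge 0} M^k h / (2\|M\|)^k$, use it to dominate the modular and to convert the $L^{p(\cdot)}$-boundedness of $M$ into a ``reverse Hölder-type'' modular inequality, and conclude that the boundedness constant on $L^{p(\cdot)/p_0}$ remains finite for $p_0-1>0$ sufficiently small.

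Finally $(3)\Leftrightarrow(4)$ is immediate: once $(1)\Leftrightarrow(2)$ is established for arbitrary exponents in $\mathcal{P}(\mathbb{R}^n)$, apply it to the exponent $q(\cdot):=p(\cdot)/p_0$, whose conjugate is $q'(\cdot)=(p(\cdot)/p_0)'$; note that $q_-=p_-/p_0>1$ so $q\in\mathcal{P}(\mathbb{R}^n)$, which is all the duality step needs. The hard part throughout is the self-improvement in $(1)\Rightarrow(3)$; the remaining three implications are essentially formal once \eqref{1.1} and the power trick are in hand.
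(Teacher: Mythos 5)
The paper does not prove this lemma at all: it is quoted verbatim from Diening \cite{D} (it is essentially Diening's deep theorem on the equivalence of boundedness of $M$ on $L^{p(\cdot)}$, on $L^{p'(\cdot)}$, and the left-openness of this property), so the only honest comparison is between your sketch and that known proof. Your easy steps are fine: $(3)\Rightarrow(1)$ by the power trick $(Mf)^{p_0}\le M(|f|^{p_0})$ together with $\|\,|h|^{p_0}\|_{L^{p(\cdot)/p_0}}=\|h\|^{p_0}_{L^{p(\cdot)}}$ is correct, and $(3)\Leftrightarrow(4)$ is indeed formal once $(1)\Leftrightarrow(2)$ is available for the exponent $q(\cdot)=p(\cdot)/p_0\in\mathcal{P}(\mathbb{R}^n)$. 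The two load-bearing implications, however, are not established by what you wrote.

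First, $(1)\Rightarrow(2)$ is not a formal adjoint argument, and your linearization step fails. Linearize $M$ by choosing for each $x$ a near-optimal cube $Q(x)\ni x$, so that $\int Mf\cdot g\lesssim \int f\,T^{*}g$ with $T^{*}g(y)=\int \chi_{Q(x)}(y)\,|Q(x)|^{-1}g(x)\,dx$. To conclude via \eqref{1.1} you would need $\|T^{*}g\|_{L^{p(\cdot)}}\lesssim\|g\|_{L^{p(\cdot)}}$. But the hypothesis only gives boundedness of $T$ (which is dominated by $M$) on $L^{p(\cdot)}$, hence of $T^{*}$ on $L^{p'(\cdot)}$ --- the wrong space; using it on $L^{p(\cdot)}$ is exactly the statement you are trying to prove, so the argument is circular. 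Nor is there a pointwise rescue: $T^{*}g$ is not dominated by $Mg$, since the contribution to $T^{*}g(y)$ from each dyadic annulus $|x-y|\sim 2^{k}$ can already be of size $Mg(y)$, and the sum over $k$ diverges. The actual proof in \cite{D} passes through an intermediate condition (uniform boundedness of averaging operators over families of disjoint cubes, Diening's class $\mathcal{A}$), and the hard direction "class $\mathcal{A}$ implies $M$ bounded" is where all the work lies. Second, for $(1)\Rightarrow(3)$ you only gesture at "a modular-level Rubio de Francia iteration"; the iteration $\mathcal{R}h=\sum_k M^{k}h/(2\|M\|)^{k}$ does produce $A_1$-type majorants from the hypothesis, but you give no argument converting that into boundedness of $M$ on $L^{p(\cdot)/p_0}$ for some $p_0>1$, and this self-improvement is precisely the deep content of the lemma (its known proofs, by Diening and later via the variable $A_{p(\cdot)}$ theory, are substantially more involved). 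As the paper does, this lemma should be cited, not re-derived in a paragraph; as a proof, your proposal has genuine gaps at both of these points.
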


Below $\mathcal{Y}$ denotes all families of disjoint and open cubes in $\mathbb{R}^n$. The next Lemma is due to Diening \cite[Lemma 5.5]{D}. We remark that Diening has proved general results on Musielak-Orlicz spaces. We describe them for Lebesgue spaces with variable exponent.

\begin{lemma}\label{l2}
If $p(\cdot)\in \mathcal{B}(\mathbb{R}^n)$, then there exist constants $C>0$ and $0<\delta<1$ such that for all $Y\in \mathcal{Y}$, all non-negative numbers $t_{Q}$ and all $f\in L^{1}_{\rm loc}(\mathbb{R}^n)$ with $f_{Q}\neq 0 (Q\in \mathcal{Y})$,
\begin{equation}\label{eq2.1}
\bigg\|\sum_{Q\in \mathcal{Y}}t_{Q}\Big|\frac{f}{f_{Q}}\Big|^{\delta}\chi_{Q}\bigg\|_{L^{p(\cdot)}(\mathbb{R}^n)}\leq C\bigg\|\sum_{Q\in \mathcal{Y}}t_{Q}\chi_{Q}\bigg\|_{L^{p(\cdot)}(\mathbb{R}^n)}.
\end{equation}
\end{lemma}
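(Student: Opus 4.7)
The plan is to execute a standard ``$p_0$-trick plus duality plus maximal function'' argument, calibrating the exponents by invoking Lemma~\ref{l1} twice. First, since $p(\cdot)\in\mathcal{B}(\mathbb{R}^n)$, Lemma~\ref{l1} yields $1<p_0<p_-$ such that both $q(\cdot):=p(\cdot)/p_0$ and $q'(\cdot)$ lie in $\mathcal{B}(\mathbb{R}^n)$. Applying Lemma~\ref{l1} a second time to $q'(\cdot)$ produces $a'\in(1,q'_-)$ with $q'(\cdot)/a'\in\mathcal{B}(\mathbb{R}^n)$. Set $\delta:=(1-1/a')/p_0$, so that $\delta\in(0,1/p_0)\subset(0,1)$, $\delta p_0\in(0,1)$, and the H\"older conjugate $a$ of $\delta p_0$ satisfies $\delta p_0+1/a'=1$. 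This arithmetic identity is what will make the powers of $|Q|$ collapse at the right place.

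Second, raise the left-hand side of \eqref{eq2.1} to the power $p_0$. Using the homogeneity $\|H\|_{L^{p(\cdot)}}^{p_0}=\||H|^{p_0}\|_{L^{q(\cdot)}}$ and disjointness of the cubes,
\[
\bigg\|\sum_{Q} t_Q\Big|\frac{f}{f_Q}\Big|^\delta\chi_Q\bigg\|_{L^{p(\cdot)}}^{p_0}
=\bigg\|\sum_{Q} t_Q^{p_0}\Big|\frac{f}{f_Q}\Big|^{\delta p_0}\chi_Q\bigg\|_{L^{q(\cdot)}}.
\]
Reducing to $f\ge 0$ (so that $|f_Q|\,|Q|=\int_Q f$) and invoking the duality \eqref{1.1} in $L^{q(\cdot)}$, it suffices to dominate $\sum_Q t_Q^{p_0}\int_Q |f/f_Q|^{\delta p_0}g\,dx$ uniformly for $\|g\|_{L^{q'(\cdot)}}\le 1$. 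On each cube $Q$, H\"older with the conjugate pair $(a,a')$ combined with $\int_Q g^{a'}\le |Q|M(g^{a'})(x)$ for $x\in Q$ and the identity $\delta p_0+1/a'=1$ gives, after averaging in $x\in Q$, the key pointwise-in-$Q$ estimate
\[
\int_Q \Big|\frac{f}{f_Q}\Big|^{\delta p_0}g\,dx\le\int_Q M(g^{a'})^{1/a'}\,dx.
\]

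Third, summing over $Q$ and invoking \eqref{1.1} once more in $L^{q(\cdot)}$,
\[
\sum_Q t_Q^{p_0}\int_Q M(g^{a'})^{1/a'}\,dx
\le C\bigg\|\sum_Q t_Q^{p_0}\chi_Q\bigg\|_{L^{q(\cdot)}}\bigl\|M(g^{a'})^{1/a'}\bigr\|_{L^{q'(\cdot)}}.
\]
Disjointness together with the homogeneity identity read in reverse shows that the first factor equals $\|\sum_Q t_Q\chi_Q\|_{L^{p(\cdot)}}^{p_0}$, while for the second I rewrite $\|M(g^{a'})^{1/a'}\|_{L^{q'(\cdot)}}=\|M(g^{a'})\|_{L^{q'(\cdot)/a'}}^{1/a'}$ and use boundedness of $M$ on $L^{q'(\cdot)/a'}$, which was precisely what the second application of Lemma~\ref{l1} arranged; since $\|g^{a'}\|_{L^{q'(\cdot)/a'}}^{1/a'}=\|g\|_{L^{q'(\cdot)}}\le 1$, this factor is bounded. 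Taking $p_0$-th roots yields \eqref{eq2.1}. The principal obstacle is the delicate parameter bookkeeping: one must simultaneously secure $\delta\in(0,1)$, the identity $\delta p_0+1/a'=1$, and $q'(\cdot)/a'\in\mathcal{B}(\mathbb{R}^n)$, which is exactly why Lemma~\ref{l1} has to be applied twice. A secondary annoyance is the reduction from general signed $f$ with $f_Q\neq 0$ to the non-negative case, since the H\"older step uses $\int_Q|f|=|f_Q||Q|$ literally.
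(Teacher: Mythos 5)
The paper never proves this lemma at all: it is quoted from Diening \cite[Lemma 5.5]{D} without argument, so there is no internal proof to compare yours against, and what you supply is a self-contained proof. For non-negative $f$ your chain is correct and complete: disjointness of the cubes justifies both uses of the identity $\||h|^{p_0}\|_{L^{p(\cdot)/p_0}}=\|h\|^{p_0}_{L^{p(\cdot)}}$; the duality \eqref{1.1} applies to $q(\cdot)=p(\cdot)/p_0\in\mathcal{P}(\mathbb{R}^n)$; the exponent arithmetic $\delta p_0 a=1$, $1/a+1/a'=1$ turns the H\"older factor into $\bigl(\int_Q (f/f_Q)^{\delta p_0 a}\,dx\bigr)^{1/a}=|Q|^{1/a}$ (this is where $f\ge 0$, i.e.\ $f_Q|Q|=\int_Q f$, is used) and makes the powers of $|Q|$ collapse against $\int_Q g^{a'}\le C|Q|\inf_{x\in Q}M(g^{a'})(x)$; and the second application of Lemma \ref{l1} gives $q'(\cdot)/a'\in\mathcal{B}(\mathbb{R}^n)$, hence $\|M(g^{a'})\|^{1/a'}_{L^{q'(\cdot)/a'}}\le C\|g\|_{L^{q'(\cdot)}}\le C$. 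The resulting $\delta=(1-1/a')/p_0$ depends only on $p(\cdot)$, as required. (A wording quibble: $a$ is the conjugate of $a'$ and satisfies $\delta p_0=1/a$; it is not ``the H\"older conjugate of $\delta p_0$''.)

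The one genuine gap is the step you yourself flag as an annoyance: the ``reduction to $f\ge 0$''. No such reduction exists, because $|f_Q|\le |f|_Q$ goes the wrong way: $|f/f_Q|\ge |f|/|f|_Q$, so the non-negative case does not imply the signed one. In fact the statement with the signed average $f_Q$ is false as literally written: take $p(\cdot)\equiv p$ constant, $Y=\{Q\}$ with $Q=(0,1)^n$, $t_Q=1$, and $f=1+N(\chi_A-\chi_{Q\setminus A})$ with $A\subset Q$, $|A|=|Q|/2$; then $f_Q=1\neq 0$ while $\||f/f_Q|^{\delta}\chi_Q\|_{L^{p}}\gtrsim N^{\delta}\to\infty$ against the fixed right-hand side $C\|\chi_Q\|_{L^{p}}$, for every $\delta\in(0,1)$. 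So the lemma has to be read for $f\ge 0$ (equivalently with $|f|_Q$ in the denominator), which is also the only way this paper uses it: in Proposition \ref{p2} the quantity that appears is $(|f|_{Q_B})^{\delta}$ with $f$ nonnegative. Your argument proves exactly that correct version; simply drop the claim that the general signed case reduces to it and state the hypothesis $f\ge 0$ (or replace $f_Q$ by $|f|_Q$) instead.
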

\vspace{0.3cm}
The following results are the key lemmas due to Izuki \cite{I}.
\begin{lemma}\label{l3}
If $p(\cdot)\in \mathcal{B}(\mathbb{R}^n)$, then there exist constants $C>0$ such that for all balls $B$ in $\mathbb{R}^n$,
\begin{equation}\label{eq2.2}
\frac{1}{|B|}\|\chi_{B}\|_{L^{p(\cdot)}(\mathbb{R}^n)}\|\chi_{B}\|_{L^{p'(\cdot)}(\mathbb{R}^n)}\leq C.
\end{equation}
\end{lemma}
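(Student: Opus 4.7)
The plan is to combine the norm-duality relation (1.1) with the boundedness of the Hardy--Littlewood maximal operator on $L^{p'(\cdot)}(\mathbb{R}^n)$, which is furnished by Lemma \ref{l1}. Note that there is actually no need to invoke Lemma \ref{l2}; the argument is a fairly standard duality plus maximal-function bootstrap.

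First, by (1.1) applied to $\chi_B$, one has
$$\|\chi_B\|_{L^{p(\cdot)}(\mathbb{R}^n)}\leq \sup\bigg\{\int_B g(x)\,dx:\ g\geq 0,\ \|g\|_{L^{p'(\cdot)}(\mathbb{R}^n)}\leq 1\bigg\}.$$
So it suffices to bound $\int_B g$ for any admissible $g$. For this, observe the elementary pointwise inequality: for every $y\in B$, the ball $B$ is contained in a ball centered at $y$ of at most twice the radius, so there is a dimensional constant $c_n$ with
$$\chi_B(y)\,\frac{c_n}{|B|}\int_B g(x)\,dx \leq M(g)(y).$$

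Next, I would take the $L^{p'(\cdot)}$-norm of both sides in $y$. By Lemma \ref{l1}, $p(\cdot)\in\mathcal{B}(\mathbb{R}^n)$ implies $p'(\cdot)\in\mathcal{B}(\mathbb{R}^n)$, so $M$ is bounded on $L^{p'(\cdot)}(\mathbb{R}^n)$; therefore
$$\frac{c_n\,\|\chi_B\|_{L^{p'(\cdot)}(\mathbb{R}^n)}}{|B|}\int_B g(x)\,dx\leq \|M(g)\|_{L^{p'(\cdot)}(\mathbb{R}^n)}\leq C\|g\|_{L^{p'(\cdot)}(\mathbb{R}^n)}\leq C.$$
Rearranging gives $\int_B g\,dx\leq C|B|/\|\chi_B\|_{L^{p'(\cdot)}(\mathbb{R}^n)}$, and then taking the supremum over $g$ yields
$$\|\chi_B\|_{L^{p(\cdot)}(\mathbb{R}^n)}\leq \frac{C|B|}{\|\chi_B\|_{L^{p'(\cdot)}(\mathbb{R}^n)}},$$
which is exactly (\ref{eq2.2}) after multiplying through by $\|\chi_B\|_{L^{p'(\cdot)}}/|B|$.

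There is no real obstacle once the strategy is fixed; the only conceptual point is to notice that the hypothesis $p(\cdot)\in\mathcal{B}(\mathbb{R}^n)$ must be transferred through Lemma \ref{l1} to its conjugate exponent, because the dual-norm characterization (1.1) naturally brings test functions from $L^{p'(\cdot)}$ into play, and it is on that space that we need the maximal inequality. (The converse bound $|B|\leq C\|\chi_B\|_{L^{p(\cdot)}}\|\chi_B\|_{L^{p'(\cdot)}}$, should one also want it, is immediate from the generalized H\"older inequality that (1.1) furnishes.)
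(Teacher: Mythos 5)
Your argument is correct: the pointwise bound $\chi_B(y)\,c_n|B|^{-1}\int_B g\le M(g)(y)$, combined with the duality estimate \eqref{1.1} and the boundedness of $M$ on $L^{p'(\cdot)}(\mathbb{R}^n)$ (via Lemma \ref{l1}), gives exactly \eqref{eq2.2}. The paper itself supplies no proof of this lemma --- it is quoted from Izuki \cite{I} --- and your duality-plus-maximal-function argument is essentially the standard proof given there, so there is nothing to add.
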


\begin{lemma}\label{l4}
If $p(\cdot)\in \mathcal{B}(\mathbb{R}^n)$, then there exist constants $\delta, C>0$ such that for all balls in $\mathbb{R}^n$ and all measurable subsets $S\subset B$,
\begin{equation}\label{eq2.3}
\frac{\|\chi_{B}\|_{L^{p(\cdot)}(\mathbb{R}^n)}}{\|\chi_{S}\|_{L^{p(\cdot)}(\mathbb{R}^n)}}\leq C\frac{|B|}{|S|},
\end{equation}
\begin{equation}\label{eq2.4}
\frac{\|\chi_{S}\|_{L^{p(\cdot)}(\mathbb{R}^n)}}{\|\chi_{B}\|_{L^{p(\cdot)}(\mathbb{R}^n)}}\leq C\bigg(\frac{|S|}{|B|}\bigg)^{\delta}.
\end{equation}
\end{lemma}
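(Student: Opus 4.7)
The plan is to prove the two estimates in turn: (\ref{eq2.3}) by a direct maximal function argument, and (\ref{eq2.4}) by a one-cube application of Lemma~\ref{l2}.

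For (\ref{eq2.3}), the key input is that $p(\cdot)\in\mathcal{B}(\mathbb{R}^n)$ means the Hardy--Littlewood maximal operator $M$ is bounded on $L^{p(\cdot)}(\mathbb{R}^n)$. Writing $B=B(x_0,R)$, for every $x\in B$ the concentric-shift ball $B(x,2R)$ contains $B$, so
$$M(\chi_S)(x)\geq \frac{1}{|B(x,2R)|}\int_{B(x,2R)}\chi_S(y)\,dy\geq \frac{|S|}{2^n|B|}.$$
Hence $\chi_B\leq 2^n(|B|/|S|)\,M(\chi_S)$ pointwise, and taking the $L^{p(\cdot)}$-norm together with the boundedness of $M$ yields (\ref{eq2.3}) with $C$ comparable to $2^n\|M\|_{L^{p(\cdot)}\to L^{p(\cdot)}}$.

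For (\ref{eq2.4}), I would invoke Lemma~\ref{l2} with the constants $\delta\in(0,1)$ and $C>0$ it supplies. Choose a cube $Q$ with $B\subset Q$ and $|Q|\leq C_n|B|$, and apply Lemma~\ref{l2} to the singleton family $\mathcal{Y}=\{Q\}$, the coefficient $t_Q=1$, and the function $f=\chi_S$ (the case $|S|=0$ being trivial, so we may assume $f_Q=|S|/|Q|\neq 0$). The left-hand side of (\ref{eq2.1}) then collapses to $(|Q|/|S|)^\delta\|\chi_S\|_{L^{p(\cdot)}(\mathbb{R}^n)}$, while the right-hand side is $C\|\chi_Q\|_{L^{p(\cdot)}(\mathbb{R}^n)}$. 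Rearranging, and using (\ref{eq2.3}) applied to a ball $B'$ circumscribing $Q$ with $|B'|\sim|B|$ to replace $\|\chi_Q\|_{L^{p(\cdot)}(\mathbb{R}^n)}$ by a constant multiple of $\|\chi_B\|_{L^{p(\cdot)}(\mathbb{R}^n)}$, produces (\ref{eq2.4}) with the same $\delta$.

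The main obstacle is really the bookkeeping between balls and cubes: Lemma~\ref{l2} is phrased for families of open cubes, whereas the present lemma concerns balls, so one must verify that inflating $B$ to a circumscribing cube, and back to a slightly larger ball, costs only an admissible dimensional constant. Beyond this, the exponent $\delta<1$ from Lemma~\ref{l2} is precisely what distinguishes the two directions: (\ref{eq2.3}) is a first-order estimate coming directly from boundedness of $M$, while (\ref{eq2.4}) encodes the stronger fact that $\|\chi_S\|_{L^{p(\cdot)}}$ decays strictly faster than linearly in $|S|/|B|$, an observation essentially baked into Diening's modular-to-norm argument behind Lemma~\ref{l2}.
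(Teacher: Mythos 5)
Your argument is correct, and it is worth noting that the paper itself offers no proof of Lemma~\ref{l4}: it is stated as a known result of Izuki \cite{I}, so your proposal is in effect a reconstruction of that proof. Both halves check out. For \eqref{eq2.3}, the pointwise bound $M(\chi_S)(x)\geq c_n|S|/|B|$ for $x\in B$ (via $B\subset B(x,2R)$) plus the boundedness of $M$ on $L^{p(\cdot)}$ is exactly the right mechanism; the published route is slightly different in packaging, deducing \eqref{eq2.3} from the generalized H\"older inequality $|S|=\int\chi_S\chi_B\,dx\leq C\|\chi_S\|_{L^{p(\cdot)}}\|\chi_B\|_{L^{p'(\cdot)}}$ together with Lemma~\ref{l3}, but since Lemma~\ref{l3} itself encodes the boundedness of $M$, the two arguments are essentially equivalent, and yours is self-contained. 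For \eqref{eq2.4}, applying Lemma~\ref{l2} to the singleton family $\{Q\}$ with $f=\chi_S$ is the standard (and Izuki's) device: the left side collapses to $(|Q|/|S|)^\delta\|\chi_S\|_{L^{p(\cdot)}}$ because $\chi_S^\delta=\chi_S$ and $S\subset Q$, and your ball--cube bookkeeping (pass to a circumscribing ball $B'$ with $|B'|\leq c_n|B|$ and use \eqref{eq2.3} for the pair $B\subset B'$, plus $\chi_Q\leq\chi_{B'}$ and $|Q|\geq|B|$) is exactly what is needed to return to $\|\chi_B\|_{L^{p(\cdot)}}$ at the cost of a dimensional constant. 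The only degenerate cases, $|S|=0$ in \eqref{eq2.4} and the vacuous reading of \eqref{eq2.3} when $|S|=0$, are handled as you indicate, so no gap remains.
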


In fact, the result of \eqref{eq2.3} in Lemma \ref{l4} can be improved as follows.
\begin{lemma}\label{l5}
If $p(\cdot)\in \mathcal{B}(\mathbb{R}^n)$ and $1<p_{0}<p_{-}$, then there exists a constant $C>0$ such that for all balls $B$ in $\mathbb{R}^n$ and all measurable subsets $S\subset B$,
\begin{equation}\label{eq2.5}
\frac{\|\chi_{B}\|_{L^{p(\cdot)}(\mathbb{R}^n)}}{\|\chi_{S}\|_{L^{p(\cdot)}(\mathbb{R}^n)}}\leq C\bigg(\frac{|B|}{|S|}\bigg)^{1/p_{0}},
\end{equation}
\end{lemma}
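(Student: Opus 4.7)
The plan is to reduce the claim to Lemma \ref{l4} applied to a rescaled exponent. By Lemma \ref{l1}, the assumption $p(\cdot)\in\mathcal{B}(\mathbb{R}^n)$ together with the choice $1<p_0<p_-$ yields that $p(\cdot)/p_0\in\mathcal{B}(\mathbb{R}^n)$. Since the right-hand exponent $1/p_0$ in \eqref{eq2.5} matches the exponent I can extract from a rescaling, this is the natural target.

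The key observation is a simple homogeneity identity: for any measurable set $E$,
\begin{equation*}
\|\chi_E\|_{L^{p(\cdot)/p_0}(\mathbb{R}^n)}=\|\chi_E\|_{L^{p(\cdot)}(\mathbb{R}^n)}^{p_0}.
\end{equation*}
This follows directly from the Luxemburg norm definition: for $\chi_E$, writing $\mu=\lambda^{p_0}$ turns the defining inequality $\int(\chi_E/\mu)^{p(x)/p_0}dx\le 1$ into $\int(\chi_E/\lambda)^{p(x)}dx\le 1$, using that $\chi_E^{p_0}=\chi_E$. Taking infima gives the identity.

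With this in hand, I would apply Lemma \ref{l4}, specifically \eqref{eq2.3}, to the exponent $p(\cdot)/p_0\in\mathcal{B}(\mathbb{R}^n)$ and the pair $S\subset B$, obtaining
\begin{equation*}
\frac{\|\chi_B\|_{L^{p(\cdot)}(\mathbb{R}^n)}^{p_0}}{\|\chi_S\|_{L^{p(\cdot)}(\mathbb{R}^n)}^{p_0}}=\frac{\|\chi_B\|_{L^{p(\cdot)/p_0}(\mathbb{R}^n)}}{\|\chi_S\|_{L^{p(\cdot)/p_0}(\mathbb{R}^n)}}\le C\frac{|B|}{|S|},
\end{equation*}
and then extract $p_0$-th roots to conclude \eqref{eq2.5}.

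The proof is short and there is no serious obstacle; the only point that requires a moment of care is verifying the homogeneity identity for $\|\chi_E\|_{L^{p(\cdot)/p_0}}$, which depends on the idempotence $\chi_E^{p_0}=\chi_E$ and therefore works for characteristic functions even though it fails in general. Everything else is a direct invocation of Lemmas \ref{l1} and \ref{l4}.
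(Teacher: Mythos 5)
Your proposal is correct and follows essentially the same route as the paper's own proof: reduce to the exponent $p(\cdot)/p_{0}\in\mathcal{B}(\mathbb{R}^n)$ via Lemma \ref{l1}, use the homogeneity identity $\|\chi_{E}\|_{L^{p(\cdot)}(\mathbb{R}^n)}=\|\chi_{E}\|^{1/p_{0}}_{L^{p(\cdot)/p_{0}}(\mathbb{R}^n)}$ (which rests on $\chi_{E}^{p_{0}}=\chi_{E}$), apply \eqref{eq2.3} from Lemma \ref{l4}, and take $p_{0}$-th roots. No gaps; the only difference is cosmetic, in that the paper states the rescaling identity for general $f$ and then specializes to characteristic functions.
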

\begin{proof}
Take a ball $B$ and a measurable subset $S\subset B$ arbitrarily. By Lemma \ref{l1}, we obtain that $p(\cdot)/p_{0}\in \mathcal{B}(\mathbb{R}^n)$, then
$$\frac{\|\chi_{B}\|_{L^{p(\cdot)/p_{0}}(\mathbb{R}^n)}}{\|\chi_{S}\|_{L^{p(\cdot)/p_{0}}(\mathbb{R}^n)}}\leq C\frac{|B|}{|S|}.$$
From the fact that $\|f\|_{L^{p(\cdot)}(\mathbb{R}^n)}=\||f|^{p_{0}}\|^{1/p_{0}}_{L^{p(\cdot)/p_{0}}(\mathbb{R}^n)}$ and $(\chi_{B})^{p_{0}}=\chi_{B}$, we compute that
$$\frac{\|\chi_{B}\|_{L^{p(\cdot)}(\mathbb{R}^n)}}{\|\chi_{S}\|_{L^{p(\cdot)}(\mathbb{R}^n)}}
=\bigg(\frac{\|\chi_{B}\|_{L^{p(\cdot)/p_{0}}(\mathbb{R}^n)}}{\|\chi_{S}\|_{L^{p(\cdot)/p_{0}}(\mathbb{R}^n)}}\bigg)^{1/p_{0}}\leq C\bigg(\frac{|B|}{|S|}\bigg)^{1/p_{0}}.$$
Therefore we have proved Lemma \ref{l5}.
\end{proof}

\section{Central BMO spaces with variable exponent}

In this section, we introduce the definition of central BMO spaces with variable exponent ${\rm CBMO}^{p(\cdot)}$ and study the properties of ${\rm CBMO}^{p(\cdot)}$.

Denote $$L^{p(\cdot)}_{\rm loc}(\mathbb{R}^n):=\Big\{f: f\in L^{p(\cdot)}(K) ~\text{for all compact subsets}~ K\subset \mathbb{R}^n\Big\}$$
and let $p(\cdot)\in \mathcal{P}(\mathbb{R}^n)$. A function $f\in L^{p(\cdot)}_{{\rm loc}}(\mathbb{R}^n)$ is said to belong to central {\rm BMO} space with variable exponent, if
$$\|f\|_{{\rm CBMO}^{p(\cdot)}(\mathbb{R}^n)}
:=\sup_{r>0}\|\chi_{B(0,r)}\|^{-1}_{L^{p(\cdot)}(\mathbb{R}^n)}\|(f-f_{B(0,r)})\chi_{B(0,r)}\|_{L^{p(\cdot)}(\mathbb{R}^n)}<\infty.$$
If $p(x)=p$ is constant, then ${\rm CBMO}^{p(\cdot)}(\mathbb{R}^n)$ equals ${\rm CBMO}^{p}(\mathbb{R}^n)$. Here and in what follows, we write ${\rm C}^{p(\cdot)}:={\rm CBMO}^{p(\cdot)}(\mathbb{R}^n)$ simple.
\vskip 0.2cm

Now, we show the relationship between ${\rm C}^{p(\cdot)}$ and central {\rm BMO} spaces.
\begin{proposition}\label{p1}
If $p(\cdot)\in \mathcal{B}(\mathbb{R}^n)$, then ${\rm C}^{p(\cdot)}\subsetneq {\rm CBMO}(\mathbb{R}^n)$.
\end{proposition}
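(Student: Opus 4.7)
The strategy splits naturally into two parts: first establish the inclusion ${\rm C}^{p(\cdot)} \subset {\rm CBMO}(\mathbb{R}^n)$ by combining the generalized H\"older inequality \eqref{1.1} with Lemma \ref{l3}, and then produce an explicit $f \in {\rm CBMO}(\mathbb{R}^n) \setminus {\rm C}^{p(\cdot)}$ to witness strictness.

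For the inclusion, I would fix $f \in {\rm C}^{p(\cdot)}$ and a ball $B = B(0,r)$. Applying \eqref{1.1} to $(f - f_B)\chi_B$ paired with $\chi_B$ yields
\[
\int_B |f(x) - f_B|\,dx \leq r_p\,\|(f - f_B)\chi_B\|_{L^{p(\cdot)}(\mathbb{R}^n)}\,\|\chi_B\|_{L^{p'(\cdot)}(\mathbb{R}^n)}.
\]
Since $p(\cdot)\in \mathcal{B}(\mathbb{R}^n)$ forces $p'(\cdot)\in \mathcal{B}(\mathbb{R}^n)$ by Lemma \ref{l1}, Lemma \ref{l3} gives $\|\chi_B\|_{L^{p(\cdot)}}\|\chi_B\|_{L^{p'(\cdot)}} \leq C|B|$. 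Multiplying and dividing the right-hand side above by $\|\chi_B\|_{L^{p(\cdot)}}$ and then dividing both sides by $|B|$ produces
\[
\frac{1}{|B|}\int_B |f - f_B|\,dx \leq C\,\|f\|_{{\rm C}^{p(\cdot)}},
\]
so taking the supremum over $r>0$ shows $f \in {\rm CBMO}(\mathbb{R}^n)$ with norm controlled by $\|f\|_{{\rm C}^{p(\cdot)}}$.

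To see that the inclusion is strict, I would build a function of the form $f = \sum_{k\geq 1} c_k \chi_{E_k}$, where $E_k$ is a small ball contained in $C_k$ and the pair $(c_k, |E_k|)$ is tuned so that $c_k|E_k| \approx 2^{kn}$. Taking $|E_k| \approx 2^{kn}/k^\alpha$ and $c_k \approx k^\alpha$ for some $\alpha > 0$ makes the averages $f_{B_N}$ bounded in $N$, which together with the non-negativity of $f$ forces $f \in {\rm CBMO}(\mathbb{R}^n)$. On the other hand, since the $E_k$ are disjoint and $f\geq 0$,
\[
\|f\chi_{B_N}\|_{L^{p(\cdot)}} \geq c_N\,\|\chi_{E_N}\|_{L^{p(\cdot)}},
\]
and invoking Lemma \ref{l5} for $E_N \subset B_N$ with $1 < p_0 < p_-$ yields
\[
\frac{\|\chi_{E_N}\|_{L^{p(\cdot)}}}{\|\chi_{B_N}\|_{L^{p(\cdot)}}} \geq C\bigg(\frac{|E_N|}{|B_N|}\bigg)^{1/p_0} \approx C\,N^{-\alpha/p_0}.
\]
Therefore $\|f\chi_{B_N}\|_{L^{p(\cdot)}}/\|\chi_{B_N}\|_{L^{p(\cdot)}} \geq C\,N^{\alpha(1 - 1/p_0)} \to \infty$, and absorbing the uniformly bounded shift $f_{B_N}$ via the triangle inequality shows $f \notin {\rm C}^{p(\cdot)}$.

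I expect the main obstacle to be this strict-inclusion step: the parameters in the counterexample must simultaneously keep the $L^1$-oscillation bounded while letting the $L^{p(\cdot)}$-oscillation diverge, and one must verify that subtracting the average $f_{B_N}$ does not cancel the $L^{p(\cdot)}$-blow-up. Lemma \ref{l5} supplies precisely the quantitative comparison needed to turn the gap $p_0 < p_-$ into the required divergence, so the choice of $\alpha$ (any positive number works) and the geometric placement of the bumps $E_k$ inside $C_k$ are the only free parameters one must specify.
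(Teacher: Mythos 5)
Your proposal is correct. The inclusion half is exactly the paper's argument: pair $(f-f_B)\chi_B$ with $\chi_B$ via \eqref{1.1} and use Lemma \ref{l3}, so there is nothing to compare there. The strictness half, however, uses a genuinely different counterexample. The paper (after reducing, somewhat tersely, to $n=1$) takes the odd function $f=\sum_k 2^k\chi_{A_k}\,\mathrm{sgn}(x)$ supported on unit-width shells $A_k$ at radius $2^k$: oddness makes $f_{B(0,r)}=0$ exactly, the $L^1$ means stay bounded because the shells are thin, and Lemma \ref{l5} applied to $A_{k_0-1}\subset B$ turns the height $\approx r$ into a lower bound $\approx r^{1-1/p_0}\to\infty$ for the $L^{p(\cdot)}$-oscillation. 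Your function $f=\sum_k c_k\chi_{E_k}$ with $c_k\approx k^{\alpha}$, $|E_k|\approx 2^{kn}/k^{\alpha}$ runs on the same engine — mass balance $c_k|E_k|\approx 2^{kn}$ keeps the $L^1$ averages bounded, while Lemma \ref{l5} converts the gap between the $L^1$ and $L^{p_0}$ scalings into the divergence $c_N(|E_N|/|B_N|)^{1/p_0}\approx N^{\alpha(1-1/p_0)}\to\infty$ — but it replaces tall, thin bumps by low, fat ones (heights only polylogarithmic in the radius), replaces the oddness trick by nonnegativity plus absorption of the uniformly bounded $f_{B_N}$ via the triangle inequality, and works directly in every dimension, so it sidesteps the paper's ``without loss of generality $n=1$'' step. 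The only points to nail down in a write-up are routine: passing from dyadic radii to all $r>0$ in the CBMO bound (doubling of $|B|$), and choosing the implicit constant in $|E_k|$ small enough (or starting the sum at large $k$) so that each $E_k$ actually fits inside $C_k$; neither affects the argument.
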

\begin{proof}
For any $B:=B(0,r)$, applying \eqref{1.1} and Lemmas \ref{l3}, we have
\begin{eqnarray*}
\frac{1}{|B|}\int_{B}|f(x)-f_{B}|dx&\leq&\frac{C}{|B|}\|(f-f_{B})\chi_{B}\|_{L^{p(\cdot)}}\|\chi_{B}\|_{L^{p'(\cdot)}}\\
&\leq&C\|\chi_{B}\|^{-1}_{L^{p(\cdot)}}\|(f-f_{B})\chi_{B}\|_{L^{p(\cdot)}}\\
&\leq&C\|f\|_{{\rm C}^{p(\cdot)}}.
\end{eqnarray*}
Therefore, we only need to prove that there exists a function $f$ such that $f \in {\rm CBMO}(\mathbb{R}^n)\backslash {\rm C}^{p(\cdot)}$. Without lose of generally, we may assume that $n=1$.

Let $A_{k}=\{x\in \mathbb{R}: 2^{k}<|x|\leq 2^{k}+1\}, k\in \mathbb{Z}_{+}$. Taking $f(x)=\sum_{k=0}^{\infty}2^{k}\chi_{A_{k}}(x)\mathrm{sgn}(x)$, then for any $B:=B(0,r)$,
$$f_{B}=\frac{1}{|B|}\int_{B}f(x)dx=0.$$
When $r\leq 1$, we have $f(x)\equiv 0$ and
\begin{equation}\label{1-1}
\sup_{0<r\leq 1}\frac{1}{|B|}\int_{B}|f(x)-f_{B}|dx=0.
\end{equation}
When $r>1$, letting $k_{0}\in \mathbb{Z}_{+}$ such that $2^{k_{0}}<r\leq 2^{k_{0}+1}$, then
\begin{equation}\label{1-2}
\sup_{r>1}\frac{1}{|B|}\int_{B}|f(x)-f_{B}|dx\leq \sup_{r>1}C2^{-k_{0}}\sum^{k_{0}+1}_{k=0}\int_{A_{k}}2^{k}dx\leq C.
\end{equation}
From (\ref{1-1}) and (\ref{1-2}), it follows that $f\in {\rm CBMO}(\mathbb{R}^n)$.

When $r>4$, letting $k_{0}\in \mathbb{Z}_{+}$ such that $2^{k_{0}}<r\leq 2^{k_{0}+1}$, thus $k_{0}\geq 2$ and
$$|(f-f_{B})\chi_{B}(x)|\geq \sum_{k=0}^{k_{0}-1}2^{k}\chi_{A_{k}}(x)\geq 2^{k_{0}-1}\chi_{A_{k_{0}-1}}(x)\geq Cr\chi_{A_{k_{0}-1}}(x),$$
which implies that
\begin{eqnarray*}
\sup_{r>0}\frac{\|(f-f_{B})\chi_{B}\|_{L^{p(\cdot)}(\mathbb{R}^n)}}{\|\chi_{B}\|_{L^{p(\cdot)}(\mathbb{R}^n)}}
&\geq& \sup_{r>4}\frac{\|(f-f_{B})\chi_{B}\|_{L^{p(\cdot)}(\mathbb{R}^n)}}{\|\chi_{B}\|_{L^{p(\cdot)}(\mathbb{R}^n)}}\\
&\geq& C\sup_{r>4}r\frac{\|\chi_{A_{k_{0}-1}}\|_{L^{p(\cdot)}(\mathbb{R}^n)}}{\|\chi_{B}\|_{L^{p(\cdot)}(\mathbb{R}^n)}}.
\end{eqnarray*}
Since $A_{k_{0}-1}\subset B$, by Lemma \ref{l5}, we have
$$\sup_{r>0}\frac{\|(f-f_{B})\chi_{B}\|_{L^{p(\cdot)}(\mathbb{R}^n)}}{\|\chi_{B}\|_{L^{p(\cdot)}(\mathbb{R}^n)}}\geq \sup_{r>4}Cr\bigg(\frac{|B|}{|A_{k_{0}-1}|}\bigg)^{-1/p_{0}}=\sup_{r>4}Cr^{1-1/p_{0}}=\infty.$$
Therefore, $f\not\in {\rm C}^{p(\cdot)}$.
\end{proof}

\begin{proposition}\label{p2}
If $p(\cdot)\in \mathcal{B}(\mathbb{R}^n)$, there is a constant $q>1$ such that ${\rm CBMO}^{q}(\mathbb{R}^n)\subset{\rm C}^{p(\cdot)}$.
\end{proposition}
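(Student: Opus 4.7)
The plan is to prove that, for each ball $B=B(0,r)$,
$$\|(f-f_{B})\chi_{B}\|_{L^{p(\cdot)}(\mathbb{R}^n)} \leq C\|\chi_{B}\|_{L^{p(\cdot)}(\mathbb{R}^n)}\|f\|_{{\rm CBMO}^{q}(\mathbb{R}^n)}$$
for an appropriate constant $q>1$ depending only on $p(\cdot)$; dividing by $\|\chi_{B}\|_{L^{p(\cdot)}(\mathbb{R}^n)}$ and taking the supremum over $r>0$ then gives the desired inclusion. The choice of $q$ is dictated by Lemma \ref{l1}: since $p'(\cdot)\in\mathcal{B}(\mathbb{R}^n)$, there exists $s$ with $1<s<(p')_{-}$ and $p'(\cdot)/s\in\mathcal{B}(\mathbb{R}^n)$, and I set $q:=s'$. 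Because $t\mapsto t'$ is decreasing on $(1,\infty)$ and $(p')_{-}=(p_{+})'$, this forces $q>p_{+}>1$ and $q'=s<(p')_{-}$.

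The first step is a duality/H\"older reduction. By \eqref{1.1}, $\|(f-f_{B})\chi_{B}\|_{L^{p(\cdot)}(\mathbb{R}^n)}$ is at most $C$ times the supremum of $\int_{B}|f-f_{B}||g|\,dx$ over $g$ with $\|g\|_{L^{p'(\cdot)}(\mathbb{R}^n)}\leq 1$. Applying the classical H\"older inequality with constant exponents $q,q'$ and the definition of ${\rm CBMO}^{q}(\mathbb{R}^n)$ bounds the $f$-factor by $|B|^{1/q}\|f\|_{{\rm CBMO}^{q}(\mathbb{R}^n)}$, leaving the factor $\bigl(\int_{B}|g|^{q'}\bigr)^{1/q'}$.

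To handle that factor, since $q>p_{+}$, the variable exponent $p'(\cdot)/q'$ takes values in $[1,\infty)$ and the generalized H\"older inequality applies to $|g|^{q'}\cdot\chi_{B}$ with the split $1=q'/p'(\cdot)+1/(p'(\cdot)/q')'$. Setting $\sigma(\cdot):=(p'(\cdot)/q')'$ and using the identity $\||g|^{q'}\|_{L^{p'(\cdot)/q'}(\mathbb{R}^n)}=\|g\|_{L^{p'(\cdot)}(\mathbb{R}^n)}^{q'}$ (as in the proof of Lemma \ref{l5}), combining everything yields
$$\|(f-f_{B})\chi_{B}\|_{L^{p(\cdot)}(\mathbb{R}^n)} \leq C|B|^{1/q}\|f\|_{{\rm CBMO}^{q}(\mathbb{R}^n)}\|\chi_{B}\|_{L^{\sigma(\cdot)}(\mathbb{R}^n)}^{1/q'}.$$

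The main obstacle is the final geometric comparison $|B|^{1/q}\|\chi_{B}\|_{L^{\sigma(\cdot)}(\mathbb{R}^n)}^{1/q'} \leq C\|\chi_{B}\|_{L^{p(\cdot)}(\mathbb{R}^n)}$: a naive H\"older split $\chi_{B}=\chi_{B}\cdot\chi_{B}$ produces this inequality in the \emph{wrong} direction. The decisive point is that by the choice of $q$ the exponent $p'(\cdot)/q'$ lies in $\mathcal{B}(\mathbb{R}^n)$, so Lemma \ref{l3} applied to it gives $\|\chi_{B}\|_{L^{p'(\cdot)/q'}(\mathbb{R}^n)}\|\chi_{B}\|_{L^{\sigma(\cdot)}(\mathbb{R}^n)}\leq C|B|$. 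Because $\|\chi_{B}\|_{L^{p'(\cdot)/q'}(\mathbb{R}^n)}=\|\chi_{B}\|_{L^{p'(\cdot)}(\mathbb{R}^n)}^{q'}$, taking $1/q'$-powers converts this into $\|\chi_{B}\|_{L^{\sigma(\cdot)}(\mathbb{R}^n)}^{1/q'}\|\chi_{B}\|_{L^{p'(\cdot)}(\mathbb{R}^n)}\leq C|B|^{1/q'}$. Multiplying by $|B|^{1/q}$ and invoking the two-sided equivalence $\|\chi_{B}\|_{L^{p(\cdot)}(\mathbb{R}^n)}\|\chi_{B}\|_{L^{p'(\cdot)}(\mathbb{R}^n)}\approx|B|$ (Lemma \ref{l3} together with the standard H\"older lower bound) delivers exactly the required estimate and closes the proof.
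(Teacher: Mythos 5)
Your proof is correct, but it follows a genuinely different route from the paper. The paper's proof is a direct one-line application of Diening's Lemma \ref{l2}: choosing the self-improvement exponent $\delta$ from \eqref{eq2.1}, setting $q=1/\delta$ and testing against $|f|=|b-b_B|^{q}\chi_B$ on an auxiliary cube $Q_B$ with $B\subset Q_B\subset\sqrt{n}B$, then removing the cube via Lemma \ref{l5}. You instead argue by duality: via \eqref{1.1} and constant-exponent H\"older with $q=s'$, where $s$ is the exponent furnished by Lemma \ref{l1} applied to $p'(\cdot)$ (so that $p'(\cdot)/s\in\mathcal{B}(\mathbb{R}^n)$), you reduce everything to the characteristic-function estimate, and the decisive geometric comparison $|B|^{1/q}\|\chi_B\|_{L^{\sigma(\cdot)}}^{1/q'}\leq C\|\chi_B\|_{L^{p(\cdot)}}$ is obtained by applying Lemma \ref{l3} to the exponent $p'(\cdot)/q'$ together with the elementary lower bound $|B|\leq C\|\chi_B\|_{L^{p(\cdot)}}\|\chi_B\|_{L^{p'(\cdot)}}$; all steps check out (in particular $q'=s<(p')_-$ guarantees $p'(\cdot)/q'$ and its conjugate are admissible exponents, and the identity $\||g|^{q'}\|_{L^{p'(\cdot)/q'}}=\|g\|_{L^{p'(\cdot)}}^{q'}$ is the same rescaling used in Lemma \ref{l5}). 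What each approach buys: the paper's argument is shorter and produces $q=1/\delta$ directly from the norm inequality \eqref{eq2.1}, while yours avoids Lemma \ref{l2} and the auxiliary cube entirely, using only Lemmas \ref{l1} and \ref{l3} plus duality, and yields an explicit admissible exponent $q=s'>p_+$. Note, though, that both proofs ultimately rest on the same deep ``left-openness'' property of the class $\mathcal{B}(\mathbb{R}^n)$ due to Diening, packaged once as Lemma \ref{l2} and once as item (3) of Lemma \ref{l1}, so neither is more elementary than the other.
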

\begin{proof}
We can take a cube $Q_{B}$ so that $B\subset Q_{B}\subset \sqrt{n}B$. By Lemma \ref{l2}, there exists a constant $0<\delta<1$ independent of $B$ such that for all $f\in L^{1}_{\rm loc}(\mathbb{R}^n)$,
$$\Big\||f|^{\delta}\chi_{Q_{B}}\Big\|_{L^{p(\cdot)}}\leq C(|f|_{Q_{B}})^{\delta}\|\chi_{Q_{B}}\|_{L^{p(\cdot)}}.$$
Now we put $q=1/\delta$ and $f=(b-b_{B})^{q}\chi_{B}$, we have
$$(|f|_{Q_{B}})^{\delta}=\bigg(\frac{1}{|Q_{B}|}\int_{B}|b(x)-b_{B}|^{q}dx\bigg)^{1/q}\leq C\|b\|_{{\rm CBMO}^{q}(\mathbb{R}^n)}.$$
By Lemma \ref{l5}, there exists a constant $1<p_{0}<p_{-}$ such that
\begin{eqnarray*}
\|\chi_{Q_{B}}\|_{L^{p(\cdot)}(\mathbb{R}^n)}
&\leq&\frac{\|\chi_{\sqrt{n}B}\|_{L^{p(\cdot)}(\mathbb{R}^n)}}{\|\chi_{B}\|_{L^{p(\cdot)}(\mathbb{R}^n)}}\|\chi_{B}\|_{L^{p(\cdot)}(\mathbb{R}^n)}\\
&\leq&C\bigg(\frac{|\sqrt{n}B|}{|B|}\bigg)^{1/p_{0}}\|\chi_{B}\|_{L^{p(\cdot)}(\mathbb{R}^n)}\\
&\leq&C\|\chi_{B}\|_{L^{p(\cdot)}(\mathbb{R}^n)}
\end{eqnarray*}
This gives us that
$$\|b\|_{{\rm C}^{p(\cdot)}}\leq C\|b\|_{{\rm CBMO}^{q}(\mathbb{R}^n)}.$$
Hence the proof of Proposition \ref{p2} is completed.
\end{proof}
\vspace{0.3cm}

\begin{proposition}\label{p3}
If $p(\cdot)\in \mathcal{B}(\mathbb{R}^n)$, then $f\in {\rm C}^{p(\cdot)}$ if and only if there exists a collection of numbers $\{c_{B}\}_{B}$ (i.e. for each ball $B$, there exists $c_{B}\in \mathbb{R}$) such that
$$\|f\|_{{\rm C}_{*}^{p(\cdot)}}
:=\sup_{r>0}\|\chi_{B(0,r)}\|^{-1}_{L^{p(\cdot)}(\mathbb{R}^n)}\|(f-c_{B(0,r)})\chi_{B(0,r)}\|_{L^{p(\cdot)}(\mathbb{R}^n)}<\infty.$$
\end{proposition}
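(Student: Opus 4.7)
The proposition is a two-way statement, and one direction is essentially free. If $f \in \mathrm{C}^{p(\cdot)}$, I would simply take $c_{B(0,r)} := f_{B(0,r)}$, giving $\|f\|_{\mathrm{C}_*^{p(\cdot)}} \leq \|f\|_{\mathrm{C}^{p(\cdot)}}$. So the real content is the converse.

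For the converse, the plan is to show that whenever a family $\{c_B\}_B$ witnesses $\|f\|_{\mathrm{C}_*^{p(\cdot)}} < \infty$, the difference $|f_{B(0,r)} - c_{B(0,r)}|$ is controlled by $\|f\|_{\mathrm{C}_*^{p(\cdot)}}$. Fix $B = B(0,r)$ and write
\[
f_B - c_B = \frac{1}{|B|}\int_B (f(x) - c_B)\,dx.
\]
Taking absolute values inside and applying the generalized Hölder inequality in the form \eqref{1.1} with the pair $(p(\cdot), p'(\cdot))$, I obtain
\[
|f_B - c_B| \leq \frac{C}{|B|}\,\|(f - c_B)\chi_B\|_{L^{p(\cdot)}}\,\|\chi_B\|_{L^{p'(\cdot)}}.
\]
Now Lemma \ref{l3} gives $\|\chi_B\|_{L^{p'(\cdot)}} \leq C\,|B|\,\|\chi_B\|_{L^{p(\cdot)}}^{-1}$, which converts this to
\[
|f_B - c_B| \leq C\,\|\chi_B\|_{L^{p(\cdot)}}^{-1}\|(f - c_B)\chi_B\|_{L^{p(\cdot)}} \leq C\,\|f\|_{\mathrm{C}_*^{p(\cdot)}}.
\]

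To finish, I would apply the triangle inequality in $L^{p(\cdot)}$ to compare $f - f_B$ with $f - c_B$:
\[
\|(f - f_B)\chi_B\|_{L^{p(\cdot)}} \leq \|(f - c_B)\chi_B\|_{L^{p(\cdot)}} + |f_B - c_B|\,\|\chi_B\|_{L^{p(\cdot)}}.
\]
Dividing by $\|\chi_B\|_{L^{p(\cdot)}}$ and substituting the bound on $|f_B - c_B|$ gives
\[
\|\chi_B\|_{L^{p(\cdot)}}^{-1}\|(f - f_B)\chi_B\|_{L^{p(\cdot)}} \leq C\,\|f\|_{\mathrm{C}_*^{p(\cdot)}},
\]
and taking the supremum over $r > 0$ yields $\|f\|_{\mathrm{C}^{p(\cdot)}} \leq C\,\|f\|_{\mathrm{C}_*^{p(\cdot)}}$.

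There is no real obstacle here: everything rides on the duality estimate \eqref{1.1} combined with Lemma \ref{l3}, both of which are already available. The only point deserving care is the implicit use of $f \in L^{p(\cdot)}_{\mathrm{loc}}(\mathbb{R}^n)$, which is needed so that $f_B$ is well defined; this is part of the definition of $\mathrm{C}^{p(\cdot)}$ but not immediate from the hypothesis on $\mathrm{C}_*^{p(\cdot)}$. However, since $(f - c_B)\chi_B \in L^{p(\cdot)}(\mathbb{R}^n)$ and $\chi_B \in L^{p(\cdot)}(\mathbb{R}^n)$, we get $f\chi_B \in L^{p(\cdot)}(\mathbb{R}^n) \subset L^1_{\mathrm{loc}}(\mathbb{R}^n)$, so local integrability and the definition of $f_B$ cause no trouble.
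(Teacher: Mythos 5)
Your proposal is correct and follows essentially the same route as the paper: take $c_B=f_B$ for the easy direction, and for the converse use the duality/H\"older estimate \eqref{1.1} together with Lemma \ref{l3} to bound $|f_B-c_B|$ by $C\|f\|_{{\rm C}_*^{p(\cdot)}}$ (the paper does this by citing the computation in Proposition \ref{p1}), then conclude by the triangle inequality. Your additional remark on why $f_B$ is well defined is a harmless extra detail not present in the paper.
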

\begin{proof}
We set $c_{B}=f_{B}$ for all balls $B$, the inequality $\|\cdot\|_{{\rm C}_{*}^{p(\cdot)}}\leq \|\cdot\|_{{\rm C}^{p(\cdot)}}$ holds. Let us check that the reverse inequality.

A similar argument as Proposition \ref{p1}, we have for any $B:=B(0,r)$,
\begin{eqnarray*}
\frac{1}{|B|}\int_{B}|f(x)-c_{B}|dx\leq C\|f\|_{{\rm C}_{*}^{p(\cdot)}}.
\end{eqnarray*}
Thus,
\begin{eqnarray*}
\frac{\|(f-f_{B})\chi_{B}\|_{L^{p(\cdot)}(\mathbb{R}^n)}}{\|\chi_{B}\|_{L^{p(\cdot)}(\mathbb{R}^n)}}
&\leq& \frac{\|(f-c_{B})\chi_{B}\|_{L^{p(\cdot)}(\mathbb{R}^n)}}{\|\chi_{B}\|_{L^{p(\cdot)}(\mathbb{R}^n)}}
+\frac{\|(c_{B}-f_{B})\chi_{B}\|_{L^{p(\cdot)}(\mathbb{R}^n)}}{\|\chi_{B}\|_{L^{p(\cdot)}(\mathbb{R}^n)}}\\
&\leq& \|f\|_{{\rm C}_{*}^{p(\cdot)}}+|c_{B}-f_{B}|\\
&\leq& C\|f\|_{{\rm C}_{*}^{p(\cdot)}}.
\end{eqnarray*}
Therefore, the proof of Proposition \ref{p3} is completed.
\end{proof}
\begin{proposition}\label{p4}
If $p(\cdot)\in \mathcal{B}(\mathbb{R}^n)$, then $f\in {\rm C}^{p(\cdot)}$ if and only if
$$\|f\|_{{\rm C}_{**}^{p(\cdot)}}
:=\sup_{r>0}\inf_{c}\|\chi_{B(0,r)}\|^{-1}_{L^{p(\cdot)}(\mathbb{R}^n)}\|(f-c)\chi_{B(0,r)}\|_{L^{p(\cdot)}(\mathbb{R}^n)}<\infty.$$
\end{proposition}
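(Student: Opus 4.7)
The plan is to derive Proposition \ref{p4} as a short corollary of Proposition \ref{p3}, since the infimum formulation is nothing other than the assertion that \emph{some} admissible family of constants $\{c_B\}_B$ exists with uniform control on the relevant quantity, which is precisely the hypothesis already analyzed in Proposition \ref{p3}.

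First I would dispatch the easy direction $\|f\|_{{\rm C}_{**}^{p(\cdot)}} \leq \|f\|_{{\rm C}^{p(\cdot)}}$: for each $r>0$, the choice $c = f_{B(0,r)}$ is admissible in the infimum, so the infimum is at most the quantity in the definition of $\|f\|_{{\rm C}^{p(\cdot)}}$. For the reverse direction, assuming $\|f\|_{{\rm C}_{**}^{p(\cdot)}} < \infty$, I would, for each radius $r>0$, exploit the definition of the infimum to select a constant $c_{B(0,r)} \in \mathbb{R}$ that is a near-minimizer, say
$$\|\chi_{B(0,r)}\|^{-1}_{L^{p(\cdot)}(\mathbb{R}^n)}\|(f-c_{B(0,r)})\chi_{B(0,r)}\|_{L^{p(\cdot)}(\mathbb{R}^n)} \leq 2\|f\|_{{\rm C}_{**}^{p(\cdot)}}.$$
Taking the supremum in $r$ on the left produces a family $\{c_B\}_B$ witnessing the hypothesis of Proposition \ref{p3} with $\|f\|_{{\rm C}_{*}^{p(\cdot)}} \leq 2 \|f\|_{{\rm C}_{**}^{p(\cdot)}}$. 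Proposition \ref{p3} then immediately yields $f \in {\rm C}^{p(\cdot)}$ together with the quantitative comparison $\|f\|_{{\rm C}^{p(\cdot)}} \leq C \|f\|_{{\rm C}_{**}^{p(\cdot)}}$.

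There is essentially no substantive obstacle here; the only mildly delicate point is that the infimum in the definition of $\|f\|_{{\rm C}_{**}^{p(\cdot)}}$ may fail to be attained, but the standard device of replacing an exact minimizer by a factor-$2$ (or $1+\varepsilon$) near-minimizer sidesteps this. Once Proposition \ref{p3} is taken as given, the equivalence asserted in Proposition \ref{p4} reduces to these two observations.
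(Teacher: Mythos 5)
Your reduction is correct and is essentially the route the paper intends: the paper omits the proof of Proposition \ref{p4}, declaring it ``similar to Proposition \ref{p3}'', and your argument makes that precise by taking $c=f_{B(0,r)}$ for one direction and selecting near-minimizers to invoke Proposition \ref{p3} for the other. The only cosmetic caveat is the degenerate case $\|f\|_{{\rm C}_{**}^{p(\cdot)}}=0$, where the factor-$2$ selection should be replaced by an additive $\varepsilon$ (or one notes the infimum is attained), exactly as you anticipate.
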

\begin{proof}
The proof of Proposition \ref{p4} is similar as that of Proposition \ref{p3}, we omit the details.
\end{proof}
\section{Characterization of ${\rm C}^{p(\cdot)}$ spaces via commutators}

We first review the definitions of Hardy operator and its dual operator. For a locally integrable function $f$ in $\mathbb{R}^n$, the $n$-dimensional Hardy operator $H$ is defined by
$$Hf(x)=\frac{1}{|x|^{n}}\int_{|y|\leq |x|}f(y)dy,\qquad x\in \mathbb{R}^n\setminus \{0\}.$$
The dual operator of $H$ is $H^{*}$ which is defined by
$$H^{*}f(x)=\int_{|y|>|x|}\frac{f(y)}{|y|^{n}}dy, \qquad x\in \mathbb{R}^n\setminus \{0\}.$$

The study of Hardy operator has a very long history and number of papers involved its generalizations, variants and applications. For the earlier development of this kind of integrals and many important applications, we refer the interested reader to the book \cite{HLP}. We are interested in the characterization of the commutator of Hardy operator. Now we give a remarkable result about the commutator of fractional Hardy operator; that is, Fu, Liu, Lu and Wang \cite{FLLW} showed that

\textbf{Theorem A.}
Let $\beta\geq 0, 1<p_{1}<\infty, 1/p_{1}-1/p_{2}=\beta/n, b\in {\rm CBMO}^{\max(p_{2},p_{1}')}(\mathbb{R}^n)$. Then both $[b,H_{\beta}]$ and $[b,H^{*}_{\beta}]$ are bounded from $L^{p_{1}}(\mathbb{R}^n)$ to $L^{p_{2}}(\mathbb{R}^n)$. Conversely,
\begin{enumerate}
\item [\rm(1)] if $[b,H_{\beta}]$ is bounded from $L^{p_{1}}(\mathbb{R}^n)$ to $L^{p_{2}}(\mathbb{R}^n)$, then $b\in {\rm CBMO}^{p'_{1}}(\mathbb{R}^n)$;
\item [\rm(2)] if $[b,H^{*}_{\beta}]$ is bounded from $L^{p_{1}}(\mathbb{R}^n)$ to $L^{p_{2}}(\mathbb{R}^n)$, then $b\in {\rm CBMO}^{p_{2}}(\mathbb{R}^n)$.
\end{enumerate}

Our main result as follows.
\begin{theorem}\label{main1}
If $p(\cdot)\in \mathcal{B}(\mathbb{R}^n)$, the following statements are equivalent:
\begin{enumerate}
\item [\rm(1)] $b\in {\rm C}^{p(\cdot)}\bigcap {\rm C}^{p'(\cdot)}.$
\item [\rm(2)] $[b,H]$ and $[b,H^{*}]$ are bounded from $L^{p(\cdot)}(\mathbb{R}^n)$ to $L^{p(\cdot)}(\mathbb{R}^n)$.
\end{enumerate}
\end{theorem}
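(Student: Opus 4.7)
For sufficiency $(1) \Rightarrow (2)$, the plan is to prove the $L^{p(\cdot)}$-boundedness of $[b,H]$ and $[b,H^*]$ by a dyadic-shell decomposition. For $x \in C_j$, the Hardy integral is over $B(0,|x|) \subset B_j$, and I would split this domain as $\bigcup_{k \leq j}(C_k \cap B(0,|x|))$, then for $y \in C_k$ decompose
$$b(x) - b(y) = (b(x) - b_{B_j}) + (b_{B_j} - b_{B_k}) + (b_{B_k} - b(y)).$$
The first piece contributes a factor $|b(x) - b_{B_j}|\chi_{C_j}(x)$ controlled by $\|b\|_{{\rm C}^{p(\cdot)}}$; the second uses the standard telescoping bound $|b_{B_j} - b_{B_k}| \leq C(j-k)\|b\|_{{\rm C}^{p(\cdot)}}$, obtained by chaining successive averages and applying \eqref{1.1} with Lemma \ref{l3}; the third, again after the H\"older-type duality \eqref{1.1}, is bounded by $\|b\|_{{\rm C}^{p'(\cdot)}}$. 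The geometric decay in $(j-k)$ afforded by Lemma \ref{l4} ensures the sums over $k$ and $j$ converge to yield $\|[b,H]f\|_{L^{p(\cdot)}} \leq C\|f\|_{L^{p(\cdot)}}$; the argument for $[b,H^*]$ is symmetric (replacing $k \leq j$ by $k \geq j$).

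For necessity $(2) \Rightarrow (1)$, both inclusions are obtained by carefully chosen test functions. Fix $B = B(0,r)$. To get $b \in {\rm C}^{p(\cdot)}$ from the boundedness of $[b,H^*]$, I would take $g_B = \chi_{B(0,2r) \setminus B}$. Since $g_B$ is supported outside $B$, for every $x \in B$ both $H^*g_B(x) = \int_{r < |y| \leq 2r}|y|^{-n}\,dy = \sigma_{n-1}\log 2$ and $H^*(bg_B)(x)$ are independent of $x$, so $[b,H^*]g_B(x) = (\sigma_{n-1}\log 2)(b(x) - c_B)$ on $B$ for an explicit number $c_B$. Combining this identity with $\|g_B\|_{L^{p(\cdot)}} \leq C\|\chi_B\|_{L^{p(\cdot)}}$ (Lemma \ref{l5}) and the operator norm of $[b,H^*]$ gives $\|(b - c_B)\chi_B\|_{L^{p(\cdot)}} \leq C\|\chi_B\|_{L^{p(\cdot)}}$ with $C$ independent of $r$, so Proposition \ref{p4} delivers $b \in {\rm C}^{p(\cdot)}$.

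To obtain $b \in {\rm C}^{p'(\cdot)}$ from the boundedness of $[b,H]$, I would use duality in a subtler way. Given $\phi \in L^{p(\cdot)}$ with $\|\phi\|_{L^{p(\cdot)}} \leq 1$, set $g = (\phi - \phi_B)\chi_B$; then $\int g = 0$, Lemma \ref{l3} yields $\|g\|_{L^{p(\cdot)}} \leq C$, and $\int_B(b-b_B)\phi = \int_B(b-b_B)g$. Because $g$ has mean zero and support in $B$, for $x \in B^c$ one has $Hg(x)=0$ and hence
$$[b,H]g(x) = -\frac{1}{|x|^n}\int_B(b(y) - b_B)g(y)\,dy, \qquad x \in B^c.$$
The operator bound then gives
$$\left|\int_B(b-b_B)g\right|\cdot\left\||x|^{-n}\chi_{B^c}\right\|_{L^{p(\cdot)}} \leq C\|g\|_{L^{p(\cdot)}}.$$
Combined with the lower bound $\||x|^{-n}\chi_{B^c}\|_{L^{p(\cdot)}} \gtrsim r^{-n}\|\chi_B\|_{L^{p(\cdot)}} \sim \|\chi_B\|_{L^{p'(\cdot)}}^{-1}$ and taking the supremum over $\phi$ via \eqref{1.1}, this yields $\|(b-b_B)\chi_B\|_{L^{p'(\cdot)}} \leq C\|\chi_B\|_{L^{p'(\cdot)}}$, i.e., $b \in {\rm C}^{p'(\cdot)}$. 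The principal difficulty of the whole proof is precisely this lower bound on $\||x|^{-n}\chi_{B^c}\|_{L^{p(\cdot)}}$: unlike the constant-exponent case where it reduces to a direct power computation, here one must isolate the first annulus $C_{k_0}$ lying outside $B$ and combine Lemmas \ref{l3} and \ref{l5} to recover the correct scaling of the dual norm $\|\chi_B\|_{L^{p'(\cdot)}}$.
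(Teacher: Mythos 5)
Your sufficiency argument $(1)\Rightarrow(2)$ is essentially the paper's: the same dyadic-shell decomposition over $C_j$, the same splitting of $b(x)-b(y)$ through the averages $b_{B_j}, b_{B_k}$, the telescoping estimate for $|b_{B_j}-b_{B_k}|$, the duality \eqref{1.1} with Lemma \ref{l3}, and the decay $2^{n\delta(k-j)}$ from Lemma \ref{l4}, followed by the same double summation (so you inherit exactly the paper's final interchange-of-sums step, no better and no worse). The necessity direction is where you genuinely diverge. The paper proves $(2)\Rightarrow(1)$ with a single pointwise identity: for $x\in B=B(0,r)$, $|b(x)-b_B|\le C|[b,H](\chi_B)(x)|+C|[b,H^{*}](f_0)(x)|$ with $f_0=|x|^n|B|^{-1}\chi_B$, which uses both commutators at once to give $b\in{\rm C}^{p(\cdot)}$, and then transfers to $p'(\cdot)$ by observing (via \eqref{1.1}, i.e. adjointness of $H$ and $H^{*}$) that both commutators are also bounded on $L^{p'(\cdot)}$. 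You instead test each operator separately: $\chi_{2B\setminus B}$ for $[b,H^{*}]$ (giving an exact identity $[b,H^{*}]g_B=(\sigma_{n-1}\log 2)(b-c_B)$ on $B$, then Lemma \ref{l5} and Proposition \ref{p3}/\ref{p4}), and a mean-zero function $g=(\phi-\phi_B)\chi_B$ for $[b,H]$, exploiting $Hg\equiv 0$ off $B$ and a lower bound for $\bigl\||x|^{-n}\chi_{B^c}\bigr\|_{L^{p(\cdot)}}$. Both routes are correct; yours is longer but yields the sharper, operatorwise conclusion in the spirit of Theorem A ($[b,H^{*}]$ alone forces $b\in{\rm C}^{p(\cdot)}$, $[b,H]$ alone forces $b\in{\rm C}^{p'(\cdot)}$), whereas the paper's identity is shorter and sidesteps entirely the "principal difficulty" you describe, since it never needs the norm of $|x|^{-n}\chi_{B^c}$. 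One small correction in your last step: the inequality $|B|\le r_p\|\chi_B\|_{L^{p(\cdot)}}\|\chi_B\|_{L^{p'(\cdot)}}$ that converts $r^{-n}\|\chi_B\|_{L^{p(\cdot)}}$ into $\|\chi_B\|_{L^{p'(\cdot)}}^{-1}$ is the generalized H\"older inequality coming from \eqref{1.1}, not Lemma \ref{l3} (Lemma \ref{l3} gives the opposite bound); with that relabeling the argument goes through.
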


\begin{proof}
$(1)\Rightarrow (2)$. We focus on the proof of the boundedness of $[b,H]$, since the arguments of $[b,H^{*}]$ are similar with necessary modifications.

For any $f\in L^{p(\cdot)}(\mathbb{R}^n)$, we have
\begin{eqnarray*}
\|[b,H](f)\|_{L^{p(\cdot)}}&=&\sum_{k=-\infty}^{\infty}\|\chi_{k}[b,H](f)\|_{L^{p(\cdot)}(\mathbb{R}^n)}\\
&=&\sum_{k=-\infty}^{\infty}\Big\|\frac{\chi_{k}(\cdot)}{|\cdot|^{n}}\int_{B(0,|\cdot|)}\big(b(\cdot)-b(y)\big)f(y)dy\Big\|_{L^{p(\cdot)}(\mathbb{R}^n)}\\
&\leq&\sum_{k=-\infty}^{\infty}\Big\|\chi_{k}(\cdot)
\sum_{j=-\infty}^{k}\frac{1}{|\cdot|^{n}}\int_{C_{j}}\big|b(\cdot)-b(y)\big||f(y)|dy\Big\|_{L^{p(\cdot)}(\mathbb{R}^n)}.
\end{eqnarray*}
It is easy to see that
\begin{eqnarray*}
\int_{C_{j}}\big|b(x)-b(y)\big||f(y)|dy&\leq&\int_{C_{j}}\big|b(x)-b_{B_{k}}\big||f(y)|dy+\int_{C_{j}}\big|b_{B_{k}}-b(y)\big||f(y)|dy
\end{eqnarray*}
By \eqref{1.1}, we get
\begin{eqnarray}\label{4.1}
\begin{split}
\int_{C_{j}}\big|b(x)-b_{B_{k}}\big||f(y)|dy
&\leq C\big|b(x)-b_{B_{k}}\big|\|\chi_{j}\|_{L^{p'(\cdot)}(\mathbb{R}^n)}\|f\chi_{j}\|_{L^{p(\cdot)}(\mathbb{R}^n)}
\end{split}
\end{eqnarray}
In \cite{FLLW}, Fu, Liu, Lu and Wang showed that for $b\in {\rm CBMO}(\mathbb{R}^n)$ and $j,k\in \mathbb{Z}$,
$$|b(t)-b_{B_{k}}|\leq |b(t)-b_{B_{j}}|+C|j-k|\|b\|_{{\rm CBMO}(\mathbb{R}^n)}.$$
By Proposition \ref{p1}, for $b\in {\rm C}^{p'(\cdot)}\subset {\rm CBMO}(\mathbb{R}^n)$ and $j,k\in \mathbb{Z}$, we have
$$|b(t)-b_{B_{k}}|\leq |b(t)-b_{B_{j}}|+C|j-k|\|b\|_{{\rm C}^{p'(\cdot)}}.$$
This gives us that
\begin{eqnarray}\label{4.2}
\begin{split}
&\int_{C_{j}}\big|b(y)-b_{B_{k}}\big||f(y)|dy\\
&\leq \int_{C_{j}}\big|b(y)-b_{B_{j}}\big||f(y)|dy+\big|b_{B_{k}}-b_{B_{j}}\big|\int_{C_{j}}|f(y)|dy\\
&\leq C\|(b-b_{B_{j}})\chi_{j}\|_{L^{p'(\cdot)}(\mathbb{R}^n)}\|f\chi_{j}\|_{L^{p(\cdot)}(\mathbb{R}^n)}
+C(k-j)\|b\|_{{\rm C}^{p'(\cdot)}}\|\chi_{j}\|_{L^{p'(\cdot)}(\mathbb{R}^n)}\|f\chi_{j}\|_{L^{p(\cdot)}(\mathbb{R}^n)}\\
&\leq C\|b\|_{{\rm C}^{p'(\cdot)}}\|\chi_{j}\|_{L^{p'(\cdot)}(\mathbb{R}^n)}\|f\chi_{j}\|_{L^{p(\cdot)}(\mathbb{R}^n)}
+C(k-j)\|b\|_{{\rm C}^{p'(\cdot)}}\|\chi_{j}\|_{L^{p'(\cdot)}(\mathbb{R}^n)}\|f\chi_{j}\|_{L^{p(\cdot)}(\mathbb{R}^n)}\\
&\leq C(k-j)\|b\|_{{\rm C}^{p'(\cdot)}}\|\chi_{j}\|_{L^{p'(\cdot)}(\mathbb{R}^n)}\|f\chi_{j}\|_{L^{p(\cdot)}(\mathbb{R}^n)}.
\end{split}
\end{eqnarray}
Combing \eqref{4.1} and \eqref{4.2}, we get
\begin{eqnarray*}
&&\Big\|\chi_{k}(\cdot)\frac{1}{|\cdot|^{n}}\int_{C_{j}}\big(b(\cdot)-b(y)\big)f(y)dy\Big\|_{L^{p(\cdot)}(\mathbb{R}^n)}\\
&&\leq C2^{-kn}\Big\|\big(b-b_{B_{k}}\big)\chi_{k}\Big\|_{L^{p(\cdot)}(\mathbb{R}^n)}\|\chi_{j}\|_{L^{p'(\cdot)}(\mathbb{R}^n)}\|f\chi_{j}\|_{L^{p(\cdot)}(\mathbb{R}^n)}\\
&&\quad+C2^{-kn}(k-j)\|\chi_{k}\|_{L^{p(\cdot)}(\mathbb{R}^n)}\|\chi_{j}\|_{L^{p'(\cdot)}(\mathbb{R}^n)}\|f\chi_{j}\|_{L^{p(\cdot)}(\mathbb{R}^n)}\\
&&\leq C2^{-kn}(k-j)\|\chi_{k}\|_{L^{p(\cdot)}(\mathbb{R}^n)}\|\chi_{j}\|_{L^{p'(\cdot)}(\mathbb{R}^n)}\|f\chi_{j}\|_{L^{p(\cdot)}(\mathbb{R}^n)}.
\end{eqnarray*}
From lemma \ref{l3} and \ref{l4}, it follows that for $k\geq j$, there exists a constant $\delta>0$ such that
\begin{eqnarray*}
2^{-kn}\|\chi_{k}\|_{L^{p(\cdot)}(\mathbb{R}^n)}\|\chi_{j}\|_{L^{p'(\cdot)}(\mathbb{R}^n)}&\leq& C\frac{\|\chi_{j}\|_{L^{p'(\cdot)}(\mathbb{R}^n)}}{\|\chi_{k}\|_{L^{p'(\cdot)}(\mathbb{R}^n)}}\\
&\leq&C2^{n\delta(j-k)}.
\end{eqnarray*}
Therefore, generalized Minkowski inequality imply
\begin{eqnarray*}
\|[b,H](f)\|_{L^{p(\cdot)}(\mathbb{R}^n)}&\leq&C\sum_{k=-\infty}^{\infty}\sum_{j=-\infty}^{k}(k-j)2^{n\delta(j-k)}\|f\chi_{j}\|_{L^{p(\cdot)}(\mathbb{R}^n)}\\
&\leq&C\sum_{j=-\infty}^{\infty}\sum_{k=j}^{\infty}(k-j)2^{n\delta(j-k)}\|f\chi_{j}\|_{L^{p(\cdot)}(\mathbb{R}^n)}\\
&\leq&C\|f\|_{L^{p(\cdot)}(\mathbb{R}^n)}.
\end{eqnarray*}

$(2)\Rightarrow (1)$. The condition $b\in {\rm C}^{p}\bigcap {\rm C}^{p'}$ turns out to be necessary for the conclusion that both $[b,H]$ and $[b,H^{*}]$ are bounded from $L^{p(\cdot)}(\mathbb{R}^n)$ to $L^{p(\cdot)}(\mathbb{R}^n)$ holds.

For any ball $B:=B(0,r)$ and $x\in B$, we have
\begin{eqnarray*}
&&|b(x)-b_{B}|=\Big|\frac{1}{|B|}\int_{B}\big(b(x)-b(y)\big)dy\Big|\\
&&\leq C\Big|\frac{1}{|x|^{n}}\int_{|y|\leq |x|}\big(b(x)-b(y)\big)\chi_{B}(y)dy\Big|
+C\Big|\int_{|x|\leq |y|}\frac{\big(b(x)-b(y)\big)\chi_{B}(y)|y|^{n}|B|^{-1}}{|y|^{n}}dy\Big|\\
&&\leq C|[b,H](\chi_{B})(x)|+C|[b,H^{*}](f_{0})(x)|,
\end{eqnarray*}
where $f_{0}(x)=|x|^{n}|B|^{-1}\chi_{B}(x)$.

From $[b,H]$ and $[b,H^{*}]$ are bounded on $L^{p(\cdot)}(\mathbb{R}^n)$, it follows that
\begin{eqnarray*}
\|(b-b_{B})\chi_{B}\|_{L^{p(\cdot)}(\mathbb{R}^n)}
&\leq&C\|[b,H](\chi_{B})\|_{L^{p(\cdot)}(\mathbb{R}^n)}+\|[b,H^{*}](f_{0})\|_{L^{p(\cdot)}(\mathbb{R}^n)}\\
&\leq& C\|\chi_{B}\|_{L^{p(\cdot)}(\mathbb{R}^n)}+C\|f_{0}\|_{L^{p(\cdot)}(\mathbb{R}^n)}\\
&\leq& C\|\chi_{B}\|_{L^{p(\cdot)}(\mathbb{R}^n)}.
\end{eqnarray*}
Therefore, we obtain that $b$ belongs to ${\rm C}^{p(\cdot)}$.

By \eqref{1.1}, we know that both $[b,H]$ and $[b,H^{*}]$ are bounded on $L^{p'(\cdot)}(\mathbb{R}^n)$. Therefore, we also obtain that $b\in {\rm C}^{p'(\cdot)}$.
\end{proof}

\section{Vector-valued inequality}

Now we give the definition of Herz spaces with variable exponent (\cite{I}). Let $\alpha\in \mathbb{R}, 0<q<\infty$ and $p(\cdot)\in \mathcal{P}(\mathbb{R}^n)$. The homogeneous Herz space $\dot{K}^{\alpha,q}_{p(\cdot)}(\mathbb{R}^n)$ is defined by
$$\dot{K}^{\alpha,q}_{p(\cdot)}(\mathbb{R}^n):=\Big\{f\in L^{p(\cdot)}_{\rm loc}(\mathbb{R}^n\backslash \{0\}):\|f\|_{\dot{K}^{\alpha,q}_{p(\cdot)}(\mathbb{R}^n)}<\infty\Big\},$$
where
$$\|f\|_{\dot{K}^{\alpha,q}_{p(\cdot)}(\mathbb{R}^n)}:=\Big\|\Big\{2^{\alpha k}\|f\chi_{k}\|_{L^{p(\cdot)}(\mathbb{R}^n)}\Big\}_{k=-\infty}^{\infty}\Big\|_{\ell^{q}}.$$

We prove the boundedness of vector-valued commutator of Hardy operator on Herz spaces with variable exponent.

\begin{theorem}\label{main2}
Let $p(\cdot)\in \mathcal{B}(\mathbb{R}^n)$, $1<r<\infty$, $0<q<\infty$, $\alpha<n/p'_{-}$ and $b\in C^{p(\cdot)}\bigcap C^{p'(\cdot)}$. Then there is a constant $C$ such that
\begin{equation*}
\bigg\|\Big(\sum_{j=1}^{\infty}|[b,H](f_{j})|^{r}\Big)^{1/r}\bigg\|_{\dot{K}^{\alpha,q}_{p(\cdot)}(\mathbb{R}^n)}\leq C\bigg\|\Big(\sum_{j=1}^{\infty}|f_{j}|^{r}\Big)^{1/r}\bigg\|_{\dot{K}^{\alpha,q}_{p(\cdot)}(\mathbb{R}^n)}
\end{equation*}
and
\begin{equation*}
\bigg\|\Big(\sum_{j=1}^{\infty}|[b,H^*](f_{j})|^{r}\Big)^{1/r}\bigg\|_{\dot{K}^{\alpha,q}_{p(\cdot)}(\mathbb{R}^n)}\leq C\bigg\|\Big(\sum_{j=1}^{\infty}|f_{j}|^{r}\Big)^{1/r}\bigg\|_{\dot{K}^{\alpha,q}_{p(\cdot)}(\mathbb{R}^n)}
\end{equation*}
for all sequences of functions $\{f_{j}\}_{j=1}^{\infty}$ satisfying $\|\|\{f_{j}\}_{j}\|_{\ell^{r}}\|_{\dot{K}^{\alpha,q}_{p(\cdot)}(\mathbb{R}^n)}<\infty$.
\end{theorem}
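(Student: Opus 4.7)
The plan is to reduce the vector-valued estimate to a scalar Herz-space bound for a sublinear majorant of the commutator, and then transplant the annular $L^{p(\cdot)}$ estimate from the proof of Theorem \ref{main1}, summing against the Herz weights via a discrete convolution inequality.

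Setting $F:=\bigl(\sum_j|f_j|^r\bigr)^{1/r}$, since $r>1$ and $|b(x)-b(y)|\ge 0$, Minkowski's inequality in $\ell^r$ gives the pointwise majorization
$$\Bigl(\sum_j\bigl|[b,H]f_j(x)\bigr|^r\Bigr)^{1/r}\le\frac{1}{|x|^n}\int_{|y|\le|x|}|b(x)-b(y)|F(y)\,dy=:T_bF(x),$$
with the analogous bound $(\sum_j|[b,H^*]f_j|^r)^{1/r}\le T_b^*F$ for the dual operator (the integral running over $|y|>|x|$ against $|y|^{-n}$). Because the Herz norm is monotone in non-negative functions, it suffices to prove $\|T_bF\|_{\dot K^{\alpha,q}_{p(\cdot)}(\mathbb R^n)}\le C\|F\|_{\dot K^{\alpha,q}_{p(\cdot)}(\mathbb R^n)}$, and symmetrically for $T_b^*$.

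For each $k\in\mathbb Z$ I restrict $T_bF$ to the annulus $C_k$ and split the inner integral dyadically over $C_l$ with $l\le k$. Repeating the chain of estimates in the proof of Theorem \ref{main1}---decomposing $b(x)-b(y)=(b(x)-b_{B_k})+(b_{B_k}-b_{B_l})+(b_{B_l}-b(y))$, using \eqref{1.1}, the hypothesis $b\in\mathrm C^{p'(\cdot)}$ together with Proposition \ref{p1}, and Lemmas \ref{l3}--\ref{l5}---yields
$$\|\chi_kT_bF\|_{L^{p(\cdot)}(\mathbb R^n)}\le C\sum_{l\le k}(k-l)\,2^{n\delta(l-k)}\|F\chi_l\|_{L^{p(\cdot)}(\mathbb R^n)},$$
where the sharp Lemma \ref{l5}, combined with the duality identity $\|\chi_B\|_{L^{p(\cdot)}}\|\chi_B\|_{L^{p'(\cdot)}}\sim|B|$, lets one arrange $n\delta>\alpha$ whenever $\alpha<n/p'_{-}$.

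Setting $a_l:=2^{\alpha l}\|F\chi_l\|_{L^{p(\cdot)}(\mathbb R^n)}$ and $c_m:=Cm\,2^{(\alpha-n\delta)m}$, multiplication by $2^{\alpha k}$ turns the previous display into a one-sided discrete convolution $2^{\alpha k}\|\chi_kT_bF\|_{L^{p(\cdot)}(\mathbb R^n)}\le(c\ast a)_k$. Because $n\delta>\alpha$, the weight sequence $c$ lies in $\ell^1\cap\ell^q$; Young's convolution inequality (when $q\ge 1$) or the elementary subadditivity $(\sum x_i)^q\le\sum x_i^q$ (when $0<q<1$) then gives $\|c\ast a\|_{\ell^q}\le C\|a\|_{\ell^q}$, which is exactly the Herz bound for $[b,H]$. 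The argument for $[b,H^*]$ is strictly symmetric, with the outer sum running over $l\ge k$ and the decay factor extracted on the $p'(\cdot)$ side by the dual form of Lemma \ref{l5}. The main obstacle I anticipate is precisely this exponent bookkeeping: tying $\delta$ to the quantitative hypothesis $\alpha<n/p'_{-}$ requires a careful joint use of the sharp size estimate of Lemma \ref{l5} and the $L^{p(\cdot)}$--$L^{p'(\cdot)}$ duality, rather than the cruder qualitative $\delta$ furnished by Lemma \ref{l4}.
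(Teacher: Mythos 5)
Your proposal follows essentially the same route as the paper's proof: reduce to the scalar majorant of $\|\{[b,H]f_j\}_j\|_{\ell^r}$ via the generalized Minkowski inequality, estimate the annular pieces by $(k-l)\,2^{n(l-k)/p_0'}\|F\chi_l\|_{L^{p(\cdot)}}$ using the CBMO difference bound $|b_{B_k}-b_{B_l}|\le C(k-l)\|b\|$, the duality of Lemma \ref{l3} and the sharp Lemma \ref{l5} with $p_0$ chosen so that $n/p_0'>\alpha$, and then sum the resulting one-sided discrete convolution in $\ell^q$ (H\"older/Young for $q>1$, $q$-subadditivity for $0<q\le 1$), exactly as in the paper. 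The only terse point is the $[b,H^*]$ half, which you, like the paper itself, dispatch by an appeal to symmetry rather than a written argument.
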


To order to prove Theorem \ref{main2}, we additionally introduce the next lemma well-known as the generalized Minkowski inequality.
\begin{lemma}\label{l7}
If $1<r<\infty$, then there exists a constant $C>0$ such that for all sequences of functions $\{f_{j}\}_{j=1}^{\infty}$ satisfying $\|\|\{f_{j}\}_{j}\|_{\ell^{r}}\|_{L^{1}(\mathbb{R}^n)}<\infty$,
\begin{equation}\label{5.1}
\bigg\{\sum_{j=1}^{\infty}\bigg(\int_{\mathbb{R}^n}|f_{j}(y)|dy\bigg)^{r}\bigg\}^{1/r}\leq C\int_{\mathbb{R}^n}\bigg\{\sum_{j=1}^{\infty}|f_{j}(y)|^{r}\bigg\}^{1/r}dy.
\end{equation}
\end{lemma}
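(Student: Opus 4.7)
The plan is to reduce Lemma \ref{l7} to the classical duality characterization of $\ell^r$-norms combined with Tonelli's theorem and H\"older's inequality in the summation variable. Since everything in sight is non-negative, all interchanges of sum and integral are unconditional, so the proof is essentially algebraic once the right dual pairing is set up.

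First, I would invoke the well-known identity
$$\bigg(\sum_{j=1}^{\infty}a_{j}^{r}\bigg)^{1/r}=\sup\bigg\{\sum_{j=1}^{\infty}a_{j}b_{j}:b_{j}\geq 0,\ \sum_{j=1}^{\infty}b_{j}^{r'}\leq 1\bigg\}$$
for any sequence of non-negative numbers $(a_{j})_{j}$, where $1/r+1/r'=1$. Applying this with $a_{j}:=\int_{\mathbb{R}^n}|f_{j}(y)|\,dy$ turns the left-hand side of \eqref{5.1} into
$$\sup_{b}\sum_{j=1}^{\infty}b_{j}\int_{\mathbb{R}^n}|f_{j}(y)|\,dy,$$
where the supremum runs over non-negative $(b_{j})$ with $\|\{b_{j}\}\|_{\ell^{r'}}\leq 1$.

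Second, for each such fixed $b$, Tonelli's theorem (applicable because every summand is non-negative) allows me to interchange the sum and the integral, and then a pointwise application of H\"older's inequality in the index $j$ gives
$$\sum_{j=1}^{\infty}b_{j}|f_{j}(y)|\leq \bigg(\sum_{j=1}^{\infty}|f_{j}(y)|^{r}\bigg)^{1/r}\bigg(\sum_{j=1}^{\infty}b_{j}^{r'}\bigg)^{1/r'}\leq \bigg(\sum_{j=1}^{\infty}|f_{j}(y)|^{r}\bigg)^{1/r}$$
for a.e.\ $y\in\mathbb{R}^n$. Integrating this bound over $\mathbb{R}^n$ and taking the supremum over admissible $b$ produces \eqref{5.1}, in fact with the sharp constant $C=1$.

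There is no real obstacle here: the hypothesis $\|\|\{f_{j}\}_{j}\|_{\ell^{r}}\|_{L^{1}(\mathbb{R}^n)}<\infty$ guarantees that the right-hand side of \eqref{5.1} is finite, hence every quantity in the chain of inequalities is well defined, and no convergence issues obstruct either Tonelli or the passage to the supremum. The only minor care needed is to restrict the dual sequences $(b_{j})$ to non-negative ones, which is permissible because the $a_{j}$'s are non-negative.
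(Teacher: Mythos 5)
Your argument is correct and complete: the $\ell^r$--$\ell^{r'}$ duality identity for non-negative sequences, Tonelli, and pointwise H\"older in $j$ give \eqref{5.1} with the sharp constant $C=1$, and restricting to non-negative dual sequences $(b_j)$ is exactly the right precaution. Note that the paper itself supplies no proof of Lemma \ref{l7} --- it simply cites the statement as the ``well-known'' generalized Minkowski inequality --- so there is no argument in the text to compare against; yours is the standard duality proof one would find in the references, and it is a valid way to fill in the omitted details. The only cosmetic remark is that the inequality is trivially true when the right-hand side of \eqref{5.1} is infinite, so the finiteness hypothesis is not actually needed for the estimate itself, only to make the lemma nonvacuous in its later applications.
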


\vskip 0.5cm
\noindent
{\it Proof of Theorem \ref{main2}.}
For every $\{f_{j}\}_{j=1}^{\infty}$ with $\|\|\{f_{j}\}_{j}\|_{\ell^{r}}\|_{\dot{K}^{\alpha,q}_{p(\cdot)}(\mathbb{R}^n)}<\infty$, we obtain
\begin{eqnarray*}
\|\|\{[b,H](f_{j})\}_{j}\|_{\ell^{r}}\|_{\dot{K}^{\alpha,q}_{p(\cdot)}(\mathbb{R}^n)}
&=&\bigg\{\sum_{k=-\infty}^{\infty}2^{\alpha q k}\Big\|\chi_{k}\Big\|\Big\{[b,H]\Big(\sum_{l=-\infty}^{\infty}f_{j}\chi_{l}\Big)\Big\}_{j}\Big\|_{\ell^{r}}\Big\|_{L^{p(\cdot)}(\mathbb{R}^n)}^{q}\bigg\}^{1/q}\\
&\leq& \bigg\{\sum_{k=-\infty}^{\infty}2^{\alpha q k}\Big\|\chi_{k}\sum_{l=-\infty}^{k}\Big\|\Big\{[b,H]\Big(f_{j}\chi_{l}\Big)\Big\}_{j}\Big\|_{\ell^{r}}\Big\|_{L^{p(\cdot)}(\mathbb{R}^n)}^{q}\bigg\}^{1/q}.
\end{eqnarray*}
For convenience below we denote $F:=\|\{f_{j}\}_{j}\|_{\ell^{r}}$. For $x\in C_{k}$, generalized H\"{o}lder inequality and generalized Minkowski inequality \eqref{5.1} imply
\begin{eqnarray*}
&&\|\{[b,H](f_{j}\chi_{l})(x)\}_{j}\|_{\ell^{r}}\\
&&\leq C\bigg\|\bigg\{\frac{1}{|x|^n}\int_{C_{l}}|b(x)-b(y)||f_{j}(y)|dy\bigg\}_{j}\bigg\|_{\ell^{r}}\\
&&\leq C2^{-kn}\bigg\|\bigg\{\int_{C_{l}}|b(x)-b(y)||f_{j}(y)|dy\bigg\}_{j}\bigg\|_{\ell^{r}}\\
&&\leq C2^{-kn}\int_{C_{l}}|b(x)-b(y)|F(y)dy\\
&&\leq C2^{-kn}\bigg\{|b(x)-b_{B_{l}}|\int_{C_{l}}F(y)dy+\int_{C_{l}}|b_{B_{l}}-b(y)|F(y)dy\bigg\}\\
&&\leq C2^{-kn}\|F\chi_{l}\|_{L^{p(\cdot)}(\mathbb{R}^n)}\bigg\{|b(x)-b_{B_{l}}|\|\chi_{l}\|_{L^{p'(\cdot)}(\mathbb{R}^n)}
+\Big\||b-b_{B_{l}}|\chi_{l}\Big\|_{L^{p'(\cdot)}(\mathbb{R}^n)}\bigg\}.
\end{eqnarray*}
From the fact that
$$|b(x)-b_{B_{l}}|\leq |b(t)-b_{B_{k}}|+C|l-k|\|b\|_{{\rm C}^{p(\cdot)}}.$$
Which gives us that
\begin{eqnarray*}
&&\|\chi_{k}\|\{[b,H](f_{j}\chi_{l})\}_{j}\|_{\ell^{r}}\|_{L^{p(\cdot)}(\mathbb{R}^n)}\\
&&\leq C2^{-kn}\|F\chi_{l}\|_{L^{p(\cdot)}(\mathbb{R}^n)}\\
&&\quad\times\Big\{\|(b-b_{B_{l}})\chi_{k}\|_{{L}^{p(\cdot)}(\mathbb{R}^n)}\|\chi_{l}\|_{L^{p'(\cdot)}(\mathbb{R}^n)}
+\|(b_{B_{l}}-b)\chi_{l}\|_{L^{p'(\cdot)}(\mathbb{R}^n)}\|\chi_{k}\|_{L^{p(\cdot)}(\mathbb{R}^n)}\Big\}\\
&&\leq C2^{-kn}\|F\chi_{l}\|_{L^{p(\cdot)}(\mathbb{R}^n)}\\
&&\quad \times\Big\{(k-l)\|b\|_{{\rm C}^{p(\cdot)}}\|\chi_{l}\|_{L^{p'(\cdot)}(\mathbb{R}^n)}\|\chi_{k}\|_{L^{p(\cdot)}(\mathbb{R}^n)}
+\|b\|_{{\rm C}^{p'(\cdot)}(\mathbb{R}^n)}\|\chi_{k}\|_{L^{p(\cdot)}(\mathbb{R}^n)}\|\chi_{l}\|_{L^{p'(\cdot)}(\mathbb{R}^{n})}\Big\}\\
&&\leq C(k-l)\|F\chi_{l}\|_{L^{p(\cdot)}(\mathbb{R}^n)}2^{-kn} \|\chi_{k}\|_{L^{p(\cdot)}(\mathbb{R}^n)}\|\chi_{l}\|_{L^{p'(\cdot)}(\mathbb{R}^{n})}\\
&&\leq C(k-l)\|F\chi_{l}\|_{L^{p(\cdot)}(\mathbb{R}^n)}2^{-kn} \|\chi_{k}\|_{L^{p(\cdot)}(\mathbb{R}^n)}\Big(|B_{l}|\|\chi_{l}\|^{-1}_{L^{p(\cdot)}(\mathbb{R}^n)}\Big).
\end{eqnarray*}
By Lemma \ref{l5} and $\alpha/n<1/p'_{-}$, there exists a constant $1<p_{0}<p_{-}$ such that $\alpha/n<1/p'_{0}$ and
$$\frac{\|\chi_{k}\|_{L^{p(\cdot)}(\mathbb{R}^n)}}{\|\chi_{l}\|_{L^{p(\cdot)}(\mathbb{R}^n)}}\leq C2^{n(k-l)/p_{0}}.$$
Then
$$\|\chi_{k}\|\{[b,H](f_{j}\chi_{l})\}_{j}\|_{\ell^{r}}\|_{L^{p(\cdot)}(\mathbb{R}^n)}\leq C(k-l)2^{n(l-k)/p'_{0}}\|F\chi_{l}\|_{L^{p(\cdot)}(\mathbb{R}^n)}.$$
This implies that
\begin{eqnarray*}
\|\|\{[b,H](f_{j})\}_{j}\|_{\ell^{r}}\|_{\dot{K}^{\alpha,q}_{p(\cdot)}(\mathbb{R}^n)}
&\leq& C\bigg\{\sum_{k=-\infty}^{\infty}2^{\alpha qk}\bigg(\sum_{l=-\infty}^{k}(k-l)2^{n(l-k)/p'_{0}}\|F\chi_{l}\|_{L^{p(\cdot)}(\mathbb{R}^n)}\bigg)^{q}\bigg\}^{1/q}\\
&=& C\bigg\{\sum_{k=-\infty}^{\infty}\bigg(\sum_{l=-\infty}^{k}2^{\alpha l}(k-l)2^{(n/p'_{0}-\alpha)(l-k)}\|F\chi_{l}\|_{L^{p(\cdot)}(\mathbb{R}^n)}\bigg)^{q}\bigg\}^{1/q}.
\end{eqnarray*}
We consider the two cases "$1<q<\infty$" and "$0<q\leq 1$".

If $1<q<\infty$, then we use H\"{o}lder's inequality and obtain
\begin{eqnarray*}
\|\|\{[b,H](f_{j})\}_{j}\|_{\ell^{r}}\|_{\dot{K}^{\alpha,q}_{p(\cdot)}(\mathbb{R}^n)}
&\leq& C\bigg\{\sum_{k=-\infty}^{\infty}\bigg(\sum^{k}_{l=-\infty}2^{\alpha ql}\|F\chi_{l}\|_{L^{p(\cdot)}(\mathbb{R}^n)}^{q}2^{(n/p'_{0}-\alpha)q(l-k)/2}\bigg)\\
&&\times\bigg(\sum_{l=-\infty}^{k}2^{(n/p'_{0}-\alpha)q'(l-k)/2}(k-l)^{q'}\bigg)^{q/q'}\bigg\}^{1/q}\\
&\leq& C\bigg(\sum_{k=-\infty}^{\infty}\sum_{l=-\infty}^{k}2^{\alpha ql}\|F\chi_{l}\|_{L^{p(\cdot)}(\mathbb{R}^n)}^{q}2^{(n/p'_{0}-\alpha)q(l-k)/2}\bigg)^{1/q}\\
&\leq& C\bigg(\sum_{l=-\infty}^{\infty}2^{\alpha ql}\|F\chi_{l}\|_{L^{p(\cdot)}(\mathbb{R}^n)}^{q}\sum_{k=l}^{\infty}2^{(n/p'_{0}-\alpha)q(l-k)/2}\bigg)^{1/q}\\
&\leq& C\bigg(\sum_{l=-\infty}^{\infty}2^{\alpha ql}\|F\chi_{l}\|_{L^{p(\cdot)}(\mathbb{R}^n)}^{q}\bigg)^{1/q}\\
&\leq& C\|F\|_{\dot{K}^{\alpha,q}_{p(\cdot)}(\mathbb{R}^n)}.
\end{eqnarray*}

If $0<q\leq 1$, then we use the following inequality
$$\Big(\sum_{i=1}^{\infty}|a_{i}|\Big)^{q}\leq \sum_{i=1}^{\infty}|a_{i}|^{q},$$
and obtain
\begin{eqnarray*}
\|\|\{[b,H](f_{j})\}_{j}\|_{\ell^{r}}\|_{\dot{K}^{\alpha,q}_{p(\cdot)}(\mathbb{R}^n)}
&\leq& C\bigg\{\sum_{k=-\infty}^{\infty}\sum^{k}_{l=-\infty}2^{\alpha ql}\|F\chi_{l}\|_{L^{p(\cdot)}(\mathbb{R}^n)}^{q}2^{(n/p'_{0}-\alpha)q(l-k)}(k-l)^{q}\bigg\}^{1/q}\\
&\leq& C\bigg(\sum_{l=-\infty}^{\infty}2^{\alpha ql}\|F\chi_{l}\|_{L^{p(\cdot)}(\mathbb{R}^n)}^{q}\sum_{k=l}^{\infty}2^{(n/p'_{0}-\alpha)q(l-k)}(k-l)^{q}\bigg)^{1/q}\\
&\leq& C\bigg(\sum_{l=-\infty}^{\infty}2^{\alpha ql}\|F\chi_{l}\|_{L^{p(\cdot)}(\mathbb{R}^n)}^{q}\bigg)^{1/q}\\
&\leq& C\|F\|_{\dot{K}^{\alpha,q}_{p(\cdot)}(\mathbb{R}^n)}.
\end{eqnarray*}
This completes the proof of Theorem \ref{main2}. \qed

\vspace{0.3cm}

\acknowledgements{\rm We thank the referees for their time and
comments. }


\begin{thebibliography}{99}

\bibitem{CL}
Chen, Y.Z., Lau, K.S.: Some new classes of Hardy spaces, J. Funct. Anal., {\bf84} (1989), 255-278.

\bibitem{CFMP}
Cruz-Uribe, D., Fiprenza, A., Martell, J.M., P\'{e}rez, C.: The boundedness of classical operators on variable $L^{p}$ spaces, Ann. Acad. Sci. Fenn. Math., {\bf 28} (2003), 223-238, and {\bf 29} (2004), 247-249.

\bibitem{D}
Diening, L.: Maximal functions on Musielak-Orlicz spaces and generalized Lebesgue spaces, Bull. Sci. Math., {\bf 129} (2005), 657-700.

\bibitem{FLLW}
Fu, Z.W., Liu, Z.G., Lu, S.Z., Wang, H.B.: Characterization for commutators of n-dimensional fractional Hardy operators,
Sci. China, Ser. A, {\bf50} (2007), 1418-1426.

\bibitem{G}
Garc\'{i}a-Cuerva, J.: Hardy spaces and Beurling algebras, J. Lond. Math. Soc., {\bf39} (1989), 499-513.

\bibitem{HLP}
Hardy, G.H., Littlewood, J.E., P\'{o}lya, G.: Inequalities, 2nd edn. Cambridge University Press, Cambrige (1952).

\bibitem{I}
Izuki, M.: Boundedness of commutators on Herz spaces with variable exponent, Rend.
Circ. Mat. Palermo {\bf59} (2010), 199-213.

\bibitem{JN}
John, F., Nirenberg, L.: On functions of bounded mean oscillation, Comm Pure Appl. Math., {\bf 2} (1961), 415-426.

\bibitem{KR}
Kov\'{a}\v{c}ik, O., R\'{a}kosn\'{i}k, J.: On spaces $L^{p(x)}$ and $W^{k,p(x)}$, Czechoslovak Math. J., {\bf 41(116)} (1991), 592-618.

\bibitem{LY1}
Lu, S.Z., Yang, D.C.: The Littlewood-Paley function and $\phi-$transform characterizations of a new Hardy space $HK_{2}$ associated with the Herz space, Studia Math., {\bf 101} (1992), 285-298.

\bibitem{LY}
Lu, S.Z., Yang, D.C.: Some new Hardy spaces associated with the Herz spaces and their applications (in chinese), J. of Beijing Normal Univ., {\bf29}(1993), 10-19.

\bibitem{WZ}
Wang, D.H., Zhou, J.: Some new class of {\rm BMO} spaces, Acta Math. Sin. (Series A), to appear.

\end{thebibliography}
\end{document}